\documentclass[a4paper,UKenglish,cleveref, autoref, thm-restate]{lipics-v2021}



\bibliographystyle{plainurl}

\title{Classification of Covering Spaces and Canonical Change of Basepoint} 


\author{Jelle Wemmenhove\footnote{Corresponding author}}{Department of Mathematics and Computer Science, Eindhoven University of Technology, The~Netherlands}{a.j.wemmenhove@tue.nl}{https://orcid.org/0000-0002-4911-2922}{}
\author{Cosmin Manea}{Department of Mathematics and Computer Science, Eindhoven University of Technology, The~Netherlands}{}{https://orcid.org/0009-0004-1940-1215}{}

\author{Jim Portegies}{Department of Mathematics and Computer Science, Eindhoven University of Technology, The~Netherlands}{}{https://orcid.org/0000-0002-2103-7334}{}

\authorrunning{J. Wemmenhove, C. Manea, and J. Portegies} 

\Copyright{Jelle Wemmenhove, Cosmin Manea, and Jim Portegies} 

\ccsdesc[100]{Mathematics of computing~Algebraic topology}
\ccsdesc[100]{Theory of computation~Type theory}
\ccsdesc[100]{Theory of computation~Constructive mathematics}

\keywords{Synthetic Homotopy Theory, Homotopy Type Theory, Covering Spaces, Change-of-Basepoint Isomorphism} 


\relatedversion{} 

\supplement{}
\supplementdetails[
linktext={https://gitlab.tue.nl/computer-verified-\\proofs/covering-spaces},
cite=project-repo,
subcategory={Source Code}, 
swhid={swh:1:dir:bea0c0af55e3ec9869679f3a5611cc5154a9ddbf;origin=https://gitlab.tue.nl/computer-verified-proofs/covering-spaces;visit=swh:1:snp:6b1d9a68d2f10e958534db29214f30ddd1dd9db3;anchor=swh:1:rev:cfc827a2b07f8cb93b412bd6e551d31fae044fb9}]
{Software}{https://gitlab.tue.nl/computer-verified-proofs/covering-spaces} 



\nolinenumbers 

\EventEditors{Delia Kesner, Eduardo Hermo Reyes, and Benno van den Berg}
\EventNoEds{3}
\EventLongTitle{29th International Conference on Types for Proofs and Programs (TYPES 2023)}
\EventShortTitle{TYPES 2023}
\EventAcronym{TYPES}
\EventYear{2023}
\EventDate{June 12-16, 2023}
\EventLocation{ETSInf, Universitat Polit\`{e}cnica de Val\`{e}ncia, Spain}
\EventLogo{}
\SeriesVolume{303}
\ArticleNo{1}

\usepackage{xspace}
\usepackage{mathtools}
\newcommand{\jdeq}{\equiv}      

\newcommand{\defeq}{\vcentcolon\equiv}  




\makeatletter
\def\prd#1{\@ifnextchar\bgroup{\prd@parens{#1}}{%
    \@ifnextchar\sm{\prd@parens{#1}\@eatsm}{%
    \@ifnextchar\prd{\prd@parens{#1}\@eatprd}{%
    \@ifnextchar\;{\prd@parens{#1}\@eatsemicolonspace}{%
    \@ifnextchar\\{\prd@parens{#1}\@eatlinebreak}{%
    \@ifnextchar\narrowbreak{\prd@parens{#1}\@eatnarrowbreak}{%
      \prd@noparens{#1}}}}}}}}
\def\prd@parens#1{\@ifnextchar\bgroup%
  {\mathchoice{\@dprd{#1}}{\@tprd{#1}}{\@tprd{#1}}{\@tprd{#1}}\prd@parens}%
  {\@ifnextchar\sm%
    {\mathchoice{\@dprd{#1}}{\@tprd{#1}}{\@tprd{#1}}{\@tprd{#1}}\@eatsm}%
    {\mathchoice{\@dprd{#1}}{\@tprd{#1}}{\@tprd{#1}}{\@tprd{#1}}}}}
\def\@eatsm\sm{\sm@parens}
\def\prd@noparens#1{\mathchoice{\@dprd@noparens{#1}}{\@tprd{#1}}{\@tprd{#1}}{\@tprd{#1}}}
\def\lprd#1{\@ifnextchar\bgroup{\@lprd{#1}\lprd}{\@@lprd{#1}}}
\def\@lprd#1{\mathchoice{{\textstyle\prod}}{\prod}{\prod}{\prod}({\textstyle #1})\;}
\def\@@lprd#1{\mathchoice{{\textstyle\prod}}{\prod}{\prod}{\prod}({\textstyle #1}),\ }
\def\tprd#1{\@tprd{#1}\@ifnextchar\bgroup{\tprd}{}}
\def\@tprd#1{\mathchoice{{\textstyle\prod_{(#1)}}}{\prod_{(#1)}}{\prod_{(#1)}}{\prod_{(#1)}}}
\def\dprd#1{\@dprd{#1}\@ifnextchar\bgroup{\dprd}{}}
\def\@dprd#1{\prod_{(#1)}\,}
\def\@dprd@noparens#1{\prod_{#1}\,}

\def\@eatnarrowbreak\narrowbreak{%
  \@ifnextchar\prd{\narrowbreak\@eatprd}{%
    \@ifnextchar\sm{\narrowbreak\@eatsm}{%
      \narrowbreak}}}
\def\@eatlinebreak\\{%
  \@ifnextchar\prd{\\\@eatprd}{%
    \@ifnextchar\sm{\\\@eatsm}{%
      \\}}}
\def\@eatsemicolonspace\;{%
  \@ifnextchar\prd{\;\@eatprd}{%
    \@ifnextchar\sm{\;\@eatsm}{%
      \;}}}

\def\lam#1{{\lambda}\@lamarg#1:\@endlamarg\@ifnextchar\bgroup{.\,\lam}{.\,}}
\def\@lamarg#1:#2\@endlamarg{\if\relax\detokenize{#2}\relax #1\else\@lamvar{\@lameatcolon#2},#1\@endlamvar\fi}
\def\@lamvar#1,#2\@endlamvar{(#2\,{:}\,#1)}
\def\@lameatcolon#1:{#1}

\def\lamu#1{{\lambda}\@lamuarg#1:\@endlamuarg\@ifnextchar\bgroup{.\,\lamu}{.\,}}
\def\@lamuarg#1:#2\@endlamuarg{#1}

\def\fall#1{\forall (#1)\@ifnextchar\bgroup{.\,\fall}{.\,}}

\def\exis#1{\exists (#1)\@ifnextchar\bgroup{.\,\exis}{.\,}}


\def\sm#1{\@ifnextchar\bgroup{\sm@parens{#1}}{%
    \@ifnextchar\prd{\sm@parens{#1}\@eatprd}{%
    \@ifnextchar\sm{\sm@parens{#1}\@eatsm}{%
    \@ifnextchar\;{\sm@parens{#1}\@eatsemicolonspace}{%
    \@ifnextchar\\{\sm@parens{#1}\@eatlinebreak}{%
    \@ifnextchar\narrowbreak{\sm@parens{#1}\@eatnarrowbreak}{%
        \sm@noparens{#1}}}}}}}}
\def\sm@parens#1{\@ifnextchar\bgroup%
  {\mathchoice{\@dsm{#1}}{\@tsm{#1}}{\@tsm{#1}}{\@tsm{#1}}\sm@parens}%
  {\@ifnextchar\prd%
    {\mathchoice{\@dsm{#1}}{\@tsm{#1}}{\@tsm{#1}}{\@tsm{#1}}\@eatprd}%
    {\mathchoice{\@dsm{#1}}{\@tsm{#1}}{\@tsm{#1}}{\@tsm{#1}}}}}
\def\@eatprd\prd{\prd@parens}
\def\sm@noparens#1{\mathchoice{\@dsm@noparens{#1}}{\@tsm{#1}}{\@tsm{#1}}{\@tsm{#1}}}
\def\lsm#1{\@ifnextchar\bgroup{\@lsm{#1}\lsm}{\@@lsm{#1}}}
\def\@lsm#1{\mathchoice{{\textstyle\sum}}{\sum}{\sum}{\sum}({\textstyle #1})\;}
\def\@@lsm#1{\mathchoice{{\textstyle\sum}}{\sum}{\sum}{\sum}({\textstyle #1}),\ }
\def\tsm#1{\@tsm{#1}\@ifnextchar\bgroup{\tsm}{}}
\def\@tsm#1{\mathchoice{{\textstyle\sum_{(#1)}}}{\sum_{(#1)}}{\sum_{(#1)}}{\sum_{(#1)}}}
\def\dsm#1{\@dsm{#1}\@ifnextchar\bgroup{\dsm}{}}
\def\@dsm#1{\sum_{(#1)}\,}
\def\@dsm@noparens#1{\sum_{#1}\,}


\def\wtype#1{\@ifnextchar\bgroup%
  {\mathchoice{\@twtype{#1}}{\@twtype{#1}}{\@twtype{#1}}{\@twtype{#1}}\wtype}%
  {\mathchoice{\@twtype{#1}}{\@twtype{#1}}{\@twtype{#1}}{\@twtype{#1}}}}
\def\lwtype#1{\@ifnextchar\bgroup{\@lwtype{#1}\lwtype}{\@@lwtype{#1}}}
\def\@lwtype#1{\mathchoice{{\textstyle\mathsf{W}}}{\mathsf{W}}{\mathsf{W}}{\mathsf{W}}({\textstyle #1})\;}
\def\@@lwtype#1{\mathchoice{{\textstyle\mathsf{W}}}{\mathsf{W}}{\mathsf{W}}{\mathsf{W}}({\textstyle #1}),\ }
\def\twtype#1{\@twtype{#1}\@ifnextchar\bgroup{\twtype}{}}
\def\@twtype#1{\mathchoice{{\textstyle\mathsf{W}_{(#1)}}}{\mathsf{W}_{(#1)}}{\mathsf{W}_{(#1)}}{\mathsf{W}_{(#1)}}}
\def\dwtype#1{\@dwtype{#1}\@ifnextchar\bgroup{\dwtype}{}}
\def\@dwtype#1{\mathsf{W}_{(#1)}\,}

\def\wtypeh#1{\@ifnextchar\bgroup%
  {\mathchoice{\@lwtypeh{#1}}{\@twtypeh{#1}}{\@twtypeh{#1}}{\@twtypeh{#1}}\wtypeh}%
  {\mathchoice{\@@lwtypeh{#1}}{\@twtypeh{#1}}{\@twtypeh{#1}}{\@twtypeh{#1}}}}
\def\lwtypeh#1{\@ifnextchar\bgroup{\@lwtypeh{#1}\lwtypeh}{\@@lwtypeh{#1}}}
\def\@lwtypeh#1{\mathchoice{{\textstyle\mathsf{W}^h}}{\mathsf{W}^h}{\mathsf{W}^h}{\mathsf{W}^h}({\textstyle #1})\;}
\def\@@lwtypeh#1{\mathchoice{{\textstyle\mathsf{W}^h}}{\mathsf{W}^h}{\mathsf{W}^h}{\mathsf{W}^h}({\textstyle #1}),\ }
\def\twtypeh#1{\@twtypeh{#1}\@ifnextchar\bgroup{\twtypeh}{}}
\def\@twtypeh#1{\mathchoice{{\textstyle\mathsf{W}^h_{(#1)}}}{\mathsf{W}^h_{(#1)}}{\mathsf{W}^h_{(#1)}}{\mathsf{W}^h_{(#1)}}}
\def\dwtypeh#1{\@dwtypeh{#1}\@ifnextchar\bgroup{\dwtypeh}{}}
\def\@dwtypeh#1{\mathsf{W}^h_{(#1)}\,}

\makeatother


\newcommand{\proj}[1]{\ensuremath{\mathsf{pr}_{#1}}\xspace}
\newcommand{\fst}{\ensuremath{\proj1}\xspace}








\newcommand{\ttype}{\ensuremath{\mathsf{Type}}} 











\newcommand{\refl}[1]{\ensuremath{\mathsf{refl}_{#1}}\xspace}

\newcommand{\ct}{%
  \mathchoice{\mathbin{\raisebox{0.5ex}{$\displaystyle\centerdot$}}}%
             {\mathbin{\raisebox{0.5ex}{$\centerdot$}}}%
             {\mathbin{\raisebox{0.25ex}{$\scriptstyle\,\centerdot\,$}}}%
             {\mathbin{\raisebox{0.1ex}{$\scriptscriptstyle\,\centerdot\,$}}}
}




\newcommand{\transfib}[3]{\ensuremath{\mathsf{transport}^{#1}(#2,#3)\xspace}}





\let\apfunc\mapfunc

\let\apdfunc\mapdepfunc


\newcommand{\idfunc}[1][]{\ensuremath{\mathsf{id}_{#1}}\xspace}




\newcommand{\eqvsym}{\simeq}    



\let\type\UU

\newcommand{\set}{\ensuremath{\mathsf{Set}}\xspace} 

\newcommand{\prop}{\ensuremath{\mathsf{Prop}}\xspace}



\newcommand{\idtoeqv}{\ensuremath{\mathsf{idtoeqv}}\xspace}





\newcommand{\trunc}[2]{\mathopen{}\left\Vert #2\right\Vert_{#1}\mathclose{}}

\newcommand{\Trunc}[2]{\Bigl\Vert #2\Bigr\Vert_{#1}}

\newcommand{\tproj}[3][]{\mathopen{}\left|#3\right|_{#2}^{#1}\mathclose{}}
\newcommand{\tprojf}[2][]{|\blank|_{#2}^{#1}}

\newcommand{\brck}[1]{\trunc{}{#1}}

\newcommand{\Brck}[1]{\Trunc{}{#1}}
\newcommand{\bproj}[1]{\tproj{}{#1}}



%










\newcommand{\base}{\ensuremath{\mathsf{base}}\xspace}


\newcommand{\blank}{\mathord{\hspace{1pt}\text{--}\hspace{1pt}}}


















\newcommand{\happly}{\mathsf{happly}}





























\newcommand{\rrefl}[2]{\ensuremath{\mathsf{refl}_{#1}^{#2}}\xspace}
\newcommand{\pto}[0]{\ensuremath{\,\,\cdotp \to}\xspace}
\usepackage{tikz-cd}

\begin{document}

\maketitle

\begin{abstract}
Using the language of homotopy type theory (HoTT), we 1) prove a synthetic version of the classification theorem for covering spaces, and 2) explore the existence of canonical change-of-basepoint isomorphisms between homotopy groups.
There is some freedom in choosing how to translate concepts from classical algebraic topology into HoTT. The final translations we ended up with are easier to work with than the ones we started with. We discuss some earlier attempts to shed light on this translation process. The proofs are mechanized using the \textsc{Coq} proof assistant and closely follow classical treatments like those by Hatcher \cite{hatcherAT}.
\end{abstract}

\section{Introduction}

\noindent Homotopy type theory (HoTT) is a variant of Martin-L\"of type theory (MLTT) that can be used as a synthetic language for developing the theory of algebraic topology, specifically the subfield of homotopy theory. This means that the types in MLTT are given topological interpretations. The main example is the identity type $a =_X b$ which in HoTT is interpreted as the type of paths from $a$ to $b$ in a space $X$\,. Since identity types are primitive objects in MLTT, one obtains a low-level encoding of the mathematical theory: definitions are simple and you can quickly go on to prove interesting theorems.

Many classical results from algebraic topology and homotopy theory have already been developed synthetically within HoTT, e.g. computations of homotopy groups of spheres (Brunerie and Licata \cite{brunerie-thesis,homotopy-spheres}, and Ljungstr\"om and M\"ortberg~\cite{computation_brunerie_nr}), the Blakers-Massey theorem (Hou, Finster, Licata, and Lumsdaine \cite{blakers-massey}), and Van Kampen's theorem (Hou and Shulman~\cite{seifert-vankampen}). Not every result, however, can be translated into homotopy type theory directly: to prove a synthetic version of Brouwer's fixed point theorem, for example, requires extending the base theory to so-called \emph{real-cohesive} homotopy type theory \cite{brouwers-fixed-point}. In this article, we build on the development of covering spaces in HoTT by Hou and Harper \cite{favonia-covspaces}, and Buchholtz and Hou's work on cellular cohomology \cite{favonia-cellcohom}.

We prove a synthetic version of the \emph{classification of covering spaces} and synthetically explore the existence of \emph{canonical change-of-basepoint isomorphisms} between homotopy groups. Although these topics seem quite disparate, the motivation to develop these results in HoTT was unitary: we had little experience in using HoTT as a synthetic language, so to increase our knowledge we set out to prove an exercise from Hatcher's \textit{Algebraic Topology} \cite{hatcherAT} (Exercise 3.3.11) in HoTT. The solution to this exercise required the development of the two topics presented here. The results have been mechanized\footnote{Code repository: \url{https://gitlab.tue.nl/computer-verified-proofs/covering-spaces}. The content of this article corresponds to version 0.3.} in the \textsc{Coq} proof assistant using the \textsc{Coq-HoTT} library \cite{coq-hott}. We were able to closely mirror the classical arguments used by Hatcher, making extensive use of core HoTT concepts like `transport' and `truncations'.
With the aim of serving as an entry point for other newcomers to HoTT, we tried to keep our proofs explicit.

Our low-level approach might suggest that HoTT is merely a formal language for re-expressing existing concepts and proofs from algebraic topology. Yet for experts, HoTT does provide new ways to think about their subjects and allows them to express new concepts in high-level conceptual arguments, such as the development of so-called `higher groups' by Buchholtz, Van Doorn, and Rijke \cite{higher-groups}.

The synthetic version of the \emph{classification of covering spaces} is shown in Section \ref{sec:classification}, as well as intermediate results like the \emph{lifting criterion}. We use Hou and Harper's definition of covering spaces \cite{favonia-covspaces} and proof techniques like `extension by weak constancy'~\cite{kraus2015weakextension} to stay close to the classical treatment by Hatcher \cite{hatcherAT}. In contrast, Buchholtz et al. prove the Galois correspondence of covering spaces from a more abstract perspective using their theory of higher groups \cite[Thm.\,7]{higher-groups}. Finding the `right' translations of classical statement into HoTT was an iterative process: after having proven a specific formulation of some theorem, we would realize that the proof could strongly be simplified if we used a different, yet equivalent, formulation. To shed some light on the translation process, Section \ref{sec:classification} also discusses some of our earlier attempts.

In Section \ref{sec:change_of_basepoint}, we prove conditions for the existence of \emph{canonical change-of-basepoint isomorphisms} between homotopy groups by looking at the triviality of the $\pi_1$-action on these groups. We were interested in the existence of such isomorphisms because we needed them to define the degree of non-pointed maps between spheres. Hou \cite{favonia-thesis} defines the degree of such maps in a different way: on the level of sets, non-pointed maps between spheres are equivalent to pointed ones, and for these it is easy to define a degree. Although this approach seems quite different from ours, we prove a classical result relating basepoint-preserving and free homotopy classes of maps, which illustrates that Hou's approach also relies on the triviality of the $\pi_1$-actions for spheres.

Let us here also quickly mention a third way to turn non-pointed maps between spheres into pointed ones. As part of recent work, Buchholtz et al.\footnote{See Meyers's talk \emph{The tangent bundles of spheres}, available at \url{https://www.youtube.com/watch?v=9T9B9XBjVpk} (timestamp 9:30 minutes).} constructed a family of sphere reparameterizations that can map any point on the sphere to the basepoint. As such any non-pointed map between spheres can be transformed into a pointed one by post-composition with the appropriate reparameterization.

Before moving on to the main results in Sections \ref{sec:classification} and \ref{sec:change_of_basepoint}, we recall the homotopical interpretation of types and discuss some techniques used throughout this article. 

\section{Background} \label{sec:background}

\subsection{Topological interpretation}

We recall the basic notions from homotopy type theory necessary for understanding the results in this article. For a full explanation of homotopy type theory, we refer to the HoTT book \cite{hottbook}. It starts with a friendly introduction to type theory and its homotopical interpretation, discusses the differences with MLTT, such as higher inductive types and the univalence axiom, and provides notes on the historical development of these ideas.
In this section, we do highlight some nuances that might be missed on an initial reading of the book.

\subsubsection{Spaces, points, paths, and higher paths}
The key to reading type theoretical statements as topological statements is the \emph{identification-as-paths} interpretation: a type $X$ is interpreted as a \emph{space}, terms $x : X$ are interpreted as \emph{points} in this space, and identity types $a =_X b$ are interpreted as the \emph{type of paths} from point~$a$ to $b$\,. Viewing the identity type $a =_X b$ as a space in itself, we obtain a type $p =_{a =_X b} q$ of \emph{paths-between-paths} with $p,q : a =_X b$\,. These paths-between-paths are called \emph{homotopies} in topology. Continuing this construction we get an infinite tower of higher-and-higher paths, endowing every type $X$ with the structure of an $\infty$-groupoid.

\begin{remark*}
Coming from a math background, it is easy to mistake \textit{``a type $X$ is interpreted as a space''} as meaning that $X$ is a \emph{topological} space. This is incorrect! It is meant that $X$ is a space precisely in the sense that it has an $\infty$-groupoid structure. $\infty$-groupoids turn out to be the right data structure for describing the (higher order) path-structure of genuine topological spaces: from a topological space $X'$ one can construct the fundamental $\infty$-groupoid $\Pi_\infty(X')$ and this $\infty$-groupoid is enough to prove many theorems in algebraic topology, specifically the theorems belonging to the subfield of homotopy theory. Words like \emph{path} and \emph{homotopy} are reused for elements of $\infty$-groupoids both to prevent the need for a new vocabulary and to more easily tap into our spatial intuition.
For an excellent discussion on the differences between $\infty$-groupoids and topological spaces, see the introduction of Shulman's work \cite{brouwers-fixed-point}.
\end{remark*}

Even though identity types $a =_X b$ are interpreted as paths, they are also still thought of as equalities. In calculations, it feels more natural to read a chain like $a = \ldots = z$ as a list of equalities than as a composition of paths. The usual rules for equalities also still apply: for example, an equality $p : a =_X b$ implies an equality $\apfunc f (p) : f(a) =_Y f(b)$\,, where $f  :X \to Y$\,. In HoTT, this statement is interpreted as saying that all functions are \emph{continuous}, in the sense that they preserve paths between points.

\subsubsection{Type families, transport, and dependent paths}

A type family $P : X \to \ttype$ is interpreted as a collection of spaces that `lie over' a \emph{base space}~$X$\,. the space $P(x)$ is also referred to as the \emph{fiber} over $x : X$\,.
The sigma type $\sum_{x : X} P(x)$ is called the \emph{total space} of $P$ and the first projection $\fst : (\sum_{x : X} P(x)) \to X$ is a special map called a \emph{fibration}. These maps play an important role in classical homotopy theory, but in HoTT it is easier to work with type families directly.

Given a path $p : a =_X b$\,, points $u : P(a)$ in the fiber over $a$ can be `transported' along $p$ to the fiber $P(b)$\,, the result is denoted as $\transfib P p u : P(b)$\,. Transporting along paths behaves as expected: transport along a constant path $\refl x : x =_X x$ leaves the points in $P(x)$ unchanged, $\transfib P {\refl x} u \jdeq u$\,, transport along a composite path $p \ct q : a =_X b =_X c$ is the same as first doing transport along $p$ and then along $q$\,,
\[
    \transfib P {p \ct q} u =_{P(c)} \transfib P q {\transfib P p u}~,
\]
and transport along the reverse path $p^{-1} : b =_X a$ is the inverse of transport along $p$\,. Chapter 2 of the HoTT book \cite{hottbook} contains a number of useful characterizations of transport in different kinds of type families. Of special interest to us are the formulas for transport in loop spaces, e.g. transport of a loop $q : a =_X a$ along $p : a =_X b$ equals conjugation with $p$\,,
\begin{equation}
    \transfib{x \mapsto x =_X x} p q =_{(b =_X b)} p^{-1} \ct q \ct p~. \label{eq:transp_conj}
\end{equation}

Transport also provides a convenient way to reason about paths in the total space. In general, a path $\widetilde{p} : (a,u) = (b,v)$ in the total space is equivalent to the combination of a path $p : a =_X b$ in the base space and a path $w_p : (\transfib P p u =_{P(b)} v)$ in the fiber $P(b)$\,, see \cite[Thm.\,2.7.2]{hottbook}. A path of the form $(\transfib P p u =_{P(b)} v)$ is called a \emph{dependent} path; it is interpreted as a path from $u$ to $v$ that lies \emph{over} the path $p$\,. For example, the dependent path (\ref{eq:transp_conj}) is interpreted as a path (i.e. a homotopy) from loop $q : a =_X a$ to loop  $p^{-1} \ct q \ct p : b =_X b$ that lies over $p$\,. The interpretation of dependent paths as lying over paths in the base space is justified by the equivalence with paths in the total space. In fact, to specify the paths that lie over some path $p$\,, it is easier to use dependent paths than paths in the total space. This is because the equality $\apfunc \fst (\widetilde p) = p$ which picks out the paths $\widetilde{p} : (a,u) = (b,v)$ that lie over $p$ is propositional and not judgemental.

Finally, we have the following result which says that transport commutes with operations on the fibers of the type family.
\begin{lemma} \label{lem:comm_with_transport}
    Let $P, Q : X \to \ttype$ be two type families and let $f_{(\blank)}:\prod_x (P(x) \to Q(x))$ be a family of maps, then for all paths $p : a =_X b$ and points $u : P(a)$ it holds that
    \[
        \transfib Q p {f_a (u)} = f_b (\transfib P p u)~.
    \]
\end{lemma}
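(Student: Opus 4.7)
The plan is to proceed by path induction on $p$. Based-path-induction fixes the endpoint $a$ and the point $u : P(a)$, and considers the motive
\[
    D(b, p) \defeq \bigl(\transfib Q p {f_a(u)} =_{Q(b)} f_b(\transfib P p u)\bigr),
\]
which is a type family over $(b : X)$ and $(p : a =_X b)$. It suffices to produce an inhabitant of $D(a, \refl a)$.

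For the base case, observe that $\transfib Q {\refl a} {f_a(u)}$ reduces judgementally to $f_a(u)$, and likewise $f_a(\transfib P {\refl a} u)$ reduces judgementally to $f_a(u)$, by the computation rule for transport along $\refl{}$. Hence the two sides of $D(a, \refl a)$ are the same term, and the inhabitant $\refl{f_a(u)}$ witnesses the required equality. Applying the based-path-induction principle then yields the statement for every $b : X$ and every $p : a =_X b$.

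There is no real obstacle here; the only thing to be careful about is that the motive used for the induction must depend on both $b$ and $p$ (and not on $a$ or $u$, which are fixed before the induction starts), so that the endpoint $b$ and the transport $\transfib P p u$ can vary along with $p$. Once the motive is set up correctly, the proof is immediate from the computation rule for transport.
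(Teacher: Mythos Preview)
Your proof by path induction is correct and is in fact the most direct argument: once $p \jdeq \refl a$, both sides reduce judgementally to $f_a(u)$ and $\refl{f_a(u)}$ finishes it.

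The paper takes a slightly different route. Rather than inducting on $p$ directly, it observes that the family $f_{(\blank)}$ itself yields a dependent path
\[
  \apdfunc{f_{(\blank)}}(p) : \transfib{x \mapsto (P(x) \to Q(x))}{p}{f_a} = f_b~,
\]
and then invokes the characterization of transport in function-type families (Lemma~2.9.6 of the HoTT book) to unfold this into the pointwise statement $\transfib{Q}{p}{f_a(u)} = f_b(\transfib{P}{p}{u})$. So where you induct and compute, the paper reuses existing infrastructure about how $\apdfunc{}$ and transport interact with $\Pi$-types. Your approach is more self-contained and arguably clearer for this isolated lemma; the paper's approach illustrates that the result is a formal consequence of the general transport-in-function-types machinery, which can be convenient when one is already working at that level of abstraction.
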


\begin{proof}
    Let $p : a =_X b$\,, then there exists a dependent equality between $f_a$ and $f_b$ over~$p$\,, 
    \[
        \apdfunc{f_{(\blank)}}(p) : \transfib {x \mapsto (P(x) \to Q(x))} p {f_a} = f_b~,
    \]
    and by Lemma 2.9.6 in the HoTT book \cite{hottbook}, this is equivalent to what we need to show.
\end{proof}

\subsubsection{Truncations and path-connectedness}

In classical homotopy theory, the infinite structure of $\infty$-groupoids can be `truncated' to make them easier to study. Such truncation operators are also available in HoTT, but here their main use is as modalities that increase the logical expressiveness of the theory.

For example, propositional truncation is needed to accurately capture the concept of path-connected spaces in homotopy type theory. A space $X$ is called path-connected (or just \emph{connected} in HoTT, as there is no analogy to the topological notion of connectedness) if for every two points $a, b : X$ there \emph{merely} exists a path between them; this is expressed as the (propositionally) truncated type $\brck{a =_X b}$ being inhabited. An \emph{explicit} witness $p : a =_X b$ would imply having a canonical choice of path, a form of constructive existence which is stronger than mere existence.

Some types already have a truncated higher-order $\infty$-groupoid structure of their own, truncating them again has no effect. A type $Z$ for which $\trunc n Z \eqvsym Z$ is called an $n$-type. In this article we will mainly encounter $(-1)$-types and $0$-types. These are called \emph{propositions} and \emph{sets} respectively, the types of propositions and sets are denoted by $\prop$ and $\set$. Two terms of a proposition are always equal, e.g. for $p, q : \brck{a =_X b}$ it holds that $p = q$\,, and sets are homotopy equivalent to a collection of points.

\subsubsection{Loop spaces and homotopy groups}
We briefly recall the definitions of loop spaces, homotopy groups, and induced maps on these.

The term $\rrefl n x$ denotes the constant $n$-dimensional path (also called an $n$-\emph{cell}) from $x : X$ to itself. It is defined recursively by
\[
    \rrefl x {n+1} \defeq \refl{\rrefl x n} : (\rrefl x n = \rrefl x n)~~~~\text{with}~~~~\rrefl x 0 \defeq x~,
\]
so $\rrefl x 1 \jdeq \refl x : (x =_X x)$ denoted the constant path and $\rrefl x 2 : (\refl x =_X \refl x)$ denotes the constant homotopy between the constant path $\refl x$ and itself.

Given a pointed type $(X,x_0)$, the space $\Omega^n(X,x_0) \defeq (\rrefl {x_0} {n-1} = \rrefl {x_0} {n-1})$ of $n$-cells is called the $n$-th \emph{loop space} of $(X,x_0)$\,. It is a pointed type in itself, with the constant $n$-cell $\rrefl{x_0}n : \Omega^n(X,x_0)$ as the designated point. Thus, we have $\Omega^{n+1}(X,x_0) \defeq \Omega(\Omega^n(X,x_0))$\,.
The \emph{homotopy groups} $\pi_n(X,x_0)$ are defined as the \emph{sets} of $n$-dimensional loops, meaning that $\pi_n(X,x_0) \defeq \trunc 0 {\Omega^n(X,x_0)}$\,.

Given a pointed map $f : (X,x_0) \pto (Y,y_0)$ with $w_f : f(x_0) = y_0$ as its proof of pointedness (which alternatively can also be denoted as a pair $(f,w_f) : (X,x_0) \pto (Y,y_0)$), there are induced maps 
$f_* : \Omega(X,x_0) \to \Omega(Y,y_0)$ and $f_* : \pi_1(X,x_0) \to \pi_1(Y,y_0)$ given by
\[
    f_*(p) \defeq w_f^{-1} \ct \apfunc f (p) \ct w_f~~~~\text{and}~~~~f_*(\tproj 0 p) \defeq \mathopen{}|w_f^{-1} \ct \apfunc f (p) \ct w_f|_0\mathclose{}~~~~\text{with }p : (x_0 =_X x_0)~.
\]
Whichever version of $f_*$ is meant should be clear from context. Note that it suffices to define $f_* : \pi_1(X,x_0) \to \pi_1(Y,y_0)$ for terms of the form $\tproj 0 p$ since the codomain is a set, see the next section.

\subsection{Dealing with truncations}\label{sec:dealing-truncations}

The topological results in this article are about connected spaces, so we are constantly confronted with the fact that we only have truncated paths $\brck{a =_X b}$\,. Although they come from non-truncated paths, we cannot freely use them as such.

The basic way to deal with truncated types is by using their induction principle: if the goal is to prove an $n$-type, we may `strip' the truncation-bars from a truncated type $\trunc n Z$ and use it as if it were the non-truncated type. In practice, this means that an $n$-truncated term $z : \trunc n Z$ can be assumed to be of the form $z \jdeq \tproj n {z'}$ with $z' : Z$\,. So, if $W$ is an $n$-type, we can define maps $\trunc n Z \to W$ by only specifying the output on terms of the form $\tproj n z$\,. 

The absence of canonical paths is also an issue in the classical algebraic topology when doing constructions. There, the solution is to take an arbitrary path for the construction and to then show that final result does not dependent of the specific path chosen. We can do something similar in HoTT, with the caveat that the constructed object's type has to be a \emph{set}. The magic ingredient is a technique called \emph{extension by weak constancy}.

\begin{lemma}[Extension by weak constancy, cf.~generalization {\cite[Thm.\,1]{kraus2015weakextension}}] \label{lem:ext_weak_constancy}
Let $Z$ be a type and~$W$ a set. If $f : Z \to W$ satisfies $ f(z_1) =_W f(z_2)$ for all $z_1, z_2 : Z$\,, it can be extended to a map $g : \brck{Z} \to W$\, such that $g(\bproj z) \jdeq f(z)$ for all $z : Z$\,.
\end{lemma}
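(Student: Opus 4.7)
The plan is to factor $f$ through its image, which under the given hypotheses will be a proposition and therefore a natural target for propositional truncation induction. Define
\[
    \im(f) \defeq \sum_{w : W} \brck{\sum_{z : Z}(f(z) = w)}
\]
together with the canonical map $\tilde f : Z \to \im(f)$ sending $z \mapsto (f(z), \bproj{(z, \refl{f(z)})})$.

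The first step is to show that $\im(f)$ is a proposition. Given two elements $(w_1, u_1), (w_2, u_2) : \im(f)$, by the characterization of paths in a $\Sigma$-type it suffices to produce a path $w_1 = w_2$, since the second components then agree over it automatically (the truncation is a proposition). Because $W$ is a set, the type $w_1 = w_2$ is itself a proposition, so I may strip the truncation bars from $u_1$ and $u_2$ to obtain witnesses $(z_1, p_1)$ and $(z_2, p_2)$. Then $p_1^{-1} \ct c(z_1, z_2) \ct p_2$ gives the required equality, where $c(z_1, z_2) : f(z_1) = f(z_2)$ is the weak constancy hypothesis applied to $z_1$ and $z_2$.

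Once $\im(f)$ is known to be a proposition, the universal property of propositional truncation extends $\tilde f$ to a map $\tilde g : \brck{Z} \to \im(f)$ satisfying $\tilde g(\bproj z) \jdeq \tilde f(z)$ judgementally. Defining $g \defeq \fst \circ \tilde g$ then yields a map $g : \brck{Z} \to W$ with
\[
    g(\bproj z) \jdeq \fst(f(z), \bproj{(z, \refl{f(z)})}) \jdeq f(z),
\]
which is the desired extension.

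The main (and essentially only) subtlety is the propositionality of $\im(f)$: this is where both the weak constancy of $f$ and the assumption that $W$ is a set genuinely enter, and neither hypothesis can be dropped. The remaining parts of the argument are routine manipulations with propositional truncations.
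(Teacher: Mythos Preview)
Your argument is correct: factoring through the image $\im(f)$, showing it is a proposition using both the weak constancy of $f$ and the set-hood of $W$, and then applying the elimination principle of $\brck{Z}$ is exactly the standard proof of this result. Note, however, that the paper does not actually prove this lemma; it merely states it and cites Kraus's paper \cite{kraus2015weakextension} for the (more general) result. Your proof is essentially the one found there, so there is nothing to compare against in the present paper.
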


Extension by weak constancy is used as follows to construct objects from truncated paths. First, give the construction for an arbitrary, non-truncated path $p : a =_X b$\,, i.e. define a map $w : (a =_X b) \to W$\,, where $W$ denotes the type of object to construct. Provided that (i)~$W$ is a set, and (ii) $w(p) = w(q)$ for all $p, q : a =_X b$\,, the construction $w$ can be extended to a map $\overline{w} : \brck{a =_X b} \to W$\,. The inhabitant of the truncated path type, $\ast : \brck{a =_X b}$\,, is inserted to obtain the final object, $w^* \defeq \overline{w}(\ast) : W$\,.
Additionally, it holds that $w^* = w(p)$ for any explicit path $p : a =_X b$\,. This is because the equality is a proposition, so the truncation from $\ast$ can be stripped, from which it follows that
\[
    w^* \jdeq \overline{w}(\ast) \jdeq \overline{w}(\bproj{q}) \jdeq w(q) = w(p)~,
\]
where $\ast \jdeq \bproj{q}$ for some $q : a =_X b$\,.

\subsection{Notation}

We stick to the notation of the HoTT book, with some exceptions: we use $\ttype$ instead of $\type$ to denote the universe of types and we use $f_*$ to denote the induced maps on loop spaces and fundamental groups instead of $\Omega(f)$ and $\pi_1(f)$ in order to stay closer to the notation used by Hatcher \cite{hatcherAT}. As not to confuse $f_*$ with the shorthand notation for transport, like $p_* u = v$, we prefer to write out transport in full, namely as $(\transfib P p u = v)$\,.

\section{Classification of covering spaces} \label{sec:classification}

In this section we prove synthetic versions of the \emph{lifting criterion} and the \emph{classification of covering spaces}. With the right translations into HoTT, we were able to obtain low-level proofs that closely follow the their classical counterparts, e.g. those used by Hatcher \cite{hatcherAT}. Figuring out what the `right' translations were, however, took multiple attempts. Besides giving the final versions of definitions and statements, we also discuss some earlier versions and what trouble they caused us, so that others may learn from our experience.

\begin{note*} There is an official notion of `correctness' for translation of classical statements into HoTT. Homotopy type theory can be modeled in the topos of simplicial sets \cite{simplicial-model}, and simplicial sets have a geometric realization as CW complexes, i.e. actual topological spaces. Statements in HoTT can thus be interpreted as statements about CW complices, and so it can be checked that the translation of statements into HoTT is equivalent to the original statements under this interpretation.
Checking the correctness of translations in this way is not the purpose of this article. We rely on the homotopical intuition build up over time by the community and the \emph{a posteriori} justification provided by proofs of classical results.
\end{note*}

\subsection{Covering spaces in HoTT}

The study of covering spaces in HoTT was initiated by Hou and Harper \cite{favonia-covspaces}. They prove that every covering space of a pointed space $(X,x_0)$ corresponds to a set with a $\pi_1(X,x_0)$-action and they construct the universal covering space. We take their definition of a pointed covering space as a starting point:

\begin{definition}[cf.~{\cite[Def.\,1\&7]{favonia-covspaces}}]\label{def:cov-space}
    Let $(X,x_0)$ be a pointed type.
    A \textbf{\emph{covering space}} of $X$ is a set-valued type family $F : X \to \set$\,. If $F$ is equipped with a designated point $u_0 : F(x_0)$\,, the pair $(F,u_0)$ is called a \textbf{\emph{pointed}} covering space.
\end{definition}
Recall that type families naturally correspond to fibrations in HoTT. The requirement that~$F$ takes values in $\set$ guarantees the `sheetedness' of the covering space as it implies that each fiber is homotopic to a discrete collection of points.

Since types automatically inherit a notion of equality, we can also check a definition's correctness by inspecting its identity type.\footnote{This method of validating definitions in HoTT by checking if they produce the right type of equality was suggested by Egbert Rijke at the 2023 HoTT/UF workshop.} The classical classification theorem only distinguishes covering spaces up to fiberwise, base-point preserving homeomorphism, and, indeed, the lemma below shows that Hou and Harper's definition gives rise to the same notion of equality. (Note that the notion of homeomorphism can only be expressed as homotopy equivalence in HoTT.)

\begin{lemma}[Characterization of equality between pointed covering spaces] \label{lem:char_pcov}
    Let $(F_1,u_1)$ and $(F_2,u_2)$ be pointed covering spaces over a pointed type $(X,x_0)$\,. Then there is an equivalence
    \[
        (F_1,u_1) = (F_2,u_2) ~~~~~ \eqvsym ~~~ \sm{h\, :\, \prod_x F_1(x)\eqvsym F_2(x)} h (x_0, u_1) =_{F_2(x_0)} u_2~.
    \]
    This implies that two pointed covering spaces are equal if and only if there exists a basepoint-preserving, fiberwise equivalence between them.
\end{lemma}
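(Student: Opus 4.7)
The plan is to decompose the equality step by step using standard equivalences from the HoTT book. Pointed covering spaces over $(X,x_0)$ form the $\Sigma$-type $\sum_{F\,:\,X \to \set} F(x_0)$, so by the characterization of paths in $\Sigma$-types (Thm.~2.7.2 of \cite{hottbook}) there is an equivalence
\[
(F_1, u_1) = (F_2, u_2) \;\;\simeq\;\; \sum_{p\,:\,F_1 = F_2} \transfib{F \mapsto F(x_0)}{p}{u_1} = u_2~.
\]
The goal then reduces to showing that the fibres over the base equality $F_1 = F_2$ match those described in the statement.

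Next I would reduce the equality $F_1 = F_2$ of set-valued type families. Applying $\funext$ gives an equivalence with $\prod_x (F_1(x) = F_2(x))$, and then univalence applied pointwise yields an equivalence with $\prod_x (F_1(x) \simeq F_2(x))$, which is exactly the type in which $h$ lives. Under this correspondence, a family $h$ of fibrewise equivalences is sent to the path $p_h \defeq \funext(\lam x \ua(h_x)) : F_1 = F_2$. Composing these three equivalences turns the left-hand side into a $\Sigma$-type indexed by $h$, and the only remaining task is to simplify the condition $\transfib{F \mapsto F(x_0)}{p_h}{u_1} = u_2$ into $h_{x_0}(u_1) = u_2$.

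The main step, and the main obstacle, is this transport computation. Using the standard fact that transport in the type family $F \mapsto F(x_0)$ along a path produced by $\funext$ is computed by evaluating the underlying homotopy at $x_0$ (cf.\ Chapter~2 of \cite{hottbook}), followed by the univalence computation rule $\transfib{\mathsf{id}}{\ua(e)}{u} = e(u)$, one obtains
\[
\transfib{F \mapsto F(x_0)}{p_h}{u_1} \;=\; h_{x_0}(u_1)~.
\]
The bookkeeping through these computation rules is routine but has to be chained carefully; once done, the composite equivalence delivers exactly the $\Sigma$-type on the right-hand side. The concluding sentence of the lemma is then just the informal reading of this $\Sigma$-type: the first component provides a fibrewise equivalence, and the second component records that it sends the designated point $u_1$ to $u_2$, i.e.\ preserves the basepoint.
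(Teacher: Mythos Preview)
Your proposal is correct and follows essentially the same route as the paper: decompose the $\Sigma$-type equality via Thm.~2.7.2, turn $F_1 = F_2$ into a family of fibrewise equivalences using function extensionality and univalence, and then reduce the transport condition to $h_{x_0}(u_1) = u_2$. The only cosmetic differences are that the paper runs the equivalence in the inverse direction (via $\happly$ and $\idtoeqv$) and dispatches the transport computation by path induction rather than by chaining the $\funext$/$\ua$ computation rules, and it briefly remarks that the equalities $F_1(x) = F_2(x)$ are technically equalities in $\set$ but that the $\isset$ component can be ignored since it is a mere proposition --- a point you could add for completeness.
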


\begin{proof}
A term $w : (F_1,u_1) = (F_2, u_2)$ is a path in the total space $\sum_X F$\,, so it is equivalent to a path between functions $h' : F_1 = F_2$ that satisfies
\[
    \transfib{F \mapsto F(x_0)}{h'}{u_1} = u_2~.
\]
By functional extensionality, $h'$ corresponds to the family $\happly(h') : \prod_x F_1(x) = F_2(x)$\,. Each $\happly(h',x) : F_1(x) = F_2(x)$ is a path between types (technically, between types with a proof that they are sets, but since being a set is a proposition in itself, this can be ignored), so by univalence $\happly(h',x)$ corresponds to an equivalence
\[
    \idtoeqv (\happly(h',x)) : F_1(x) \eqvsym F_2(x)~.
\]
Thus $h'$ is corresponds 1-to-1 to the fiberwise equivalence $h : \prod_{x:X} F_1(x) \eqvsym F_2(x)$ given by~$h(x,\blank) \defeq \idtoeqv (\happly(h',x))$\,.
By path induction, it holds that
\[
    h(x,\blank) \jdeq \idtoeqv (\happly(h',x)) = \transfib{F \to F(x)}{h'}{\blank}~,
\]
so the condition $(\transfib{F \mapsto F(x_0)}{h'}{u_1} = u_2)$ satisfied by $h'$ is equivalent to the statement $h(x_0,u_1) = u_2$\,, giving us an equivalence on the level of sigma types.
\end{proof}

\subsection{Lifting criterion}

The lifting criterion serves as a nice stepping stone towards the classification of covering spaces. Not only is it used in the classical proof of the classification theorem, it also allows us to practice with translating classical statements into homotopy type theory. First, we obtain a direct translation of the lifting criterion, but it turns out that this translation can be simplified whilst more closely reflecting the geometric ideas.

We define the lift of a map in homotopy type theory as follows.

\begin{definition} \label{def:ptd_lift}
    Let $f : (Y,y_0) \pto (X,x_0)$ be a pointed map with $w_f : f(y_0) =_X x_0$ its proof of pointedness. A \textbf{\emph{pointed lift}} of $f$ to the covering space $(F,u_0)$ over $(X,x_0)$ is a dependent map $\widetilde{f} : \prod_{y:Y} F(f(y))$ such that
    \[
        \transfib{F}{w_f}{\widetilde{f}(y_0)} =_{F(x_0)} u_0~.
    \]
\end{definition}

Instead of defining the lift as a dependent map $\widetilde{f} : \prod_{y:Y} F(f(y))$ --- meaning that $\widetilde{f}$ maps each point $y : Y$ to a point in the fiber over $f(y)$ --- we could have stayed closer to the classical definition and defined the lift as a map $\widetilde{f} : Y \to \sum_X F$ such that $w_{\widetilde f} : \fst \circ \widetilde{f} = f$\,. These formulations are equivalent and both types deserve to be called lifts, but we prefer to use the dependent map formulation because, in our experience, it was easier to work with.
We tried to use the classical formulation in an earlier attempt, but this required us to use transport along $w_{\widetilde f}$\, every time we had to compare terms in the fibers over $\fst(\widetilde f (y))$ and~$f(y)$\,. This does not happen when defining the lift as a dependent map, because then the equality $\fst \circ \widetilde{f} \jdeq f$ is a judgemental one.

Using Definition \ref{def:ptd_lift}, the lifting criterion can be formulated directly in homotopy type theory.

\begin{theorem}[direct translation, cf.~{\cite[Prop.\,1.33]{hatcherAT}}] \label{thm:lift-crit-direct}
    Let $(F,u_0)$ be a pointed covering space over a pointed type $(X,x_0)$\,. A pointed map $f : (Y, y_0) \pto (X,x_0)$\,, with $Y$ a connected type, can be lifted to pointed lift $\widetilde{f} : \prod_{y : Y} F(f(y))$ if and only if
    \begin{equation} 
        f_*(\pi_1(Y,y_0)) \subset {\fst}_*(\pi_1(\textstyle \sum_X F, (x_0,u_0)))~. \label{eq:lift_crit}
    \end{equation}
\end{theorem}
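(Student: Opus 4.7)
My plan is to prove the two directions separately. The forward direction is a factorization: a pointed lift $\widetilde f$ yields a pointed map $g : (Y,y_0) \pto (\sum_X F, (x_0,u_0))$ defined by $g(y) \defeq (f(y), \widetilde f(y))$\,, with pointedness assembled from $w_f$ and the transport condition of Definition \ref{def:ptd_lift} via the characterization of paths in a sigma type. Since $f = \fst \circ g$ as pointed maps, we get $f_* = \fst_* \circ g_*$ on fundamental groups, which immediately gives the containment (\ref{eq:lift_crit}).

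For the backward direction, I build $\widetilde f$ pointwise using extension by weak constancy (Lemma \ref{lem:ext_weak_constancy}). For each $y : Y$ and each path $p : y_0 =_Y y$\,, define
\[
    w_y(p) \defeq \transfib F {\apfunc f (p)}{\transfib F {\opp{w_f}}{u_0}} : F(f(y))\,.
\]
Because $F(f(y))$ is a set, proving weak constancy $w_y(p) = w_y(q)$ extends $w_y$ to $\brck{y_0 =_Y y} \to F(f(y))$\,; connectedness of $Y$ then supplies an inhabitant of $\brck{y_0 =_Y y}$ which, fed in, defines $\widetilde f(y)$\,. Evaluating at $p \defeq \refl{y_0}$ gives $\widetilde f(y_0) = \transfib F {\opp{w_f}}{u_0}$\,, whence the pointedness condition $\transfib F {w_f}{\widetilde f(y_0)} = u_0$ follows by cancellation of transport along inverse paths.

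The main obstacle is verifying weak constancy. Standard identities for transport reduce $w_y(p) = w_y(q)$ to $\transfib F {f_*(p \ct \opp q)}{u_0} = u_0$\,, where $p \ct \opp q : \Omega(Y,y_0)$ is a loop at $y_0$\,. Because $F(x_0)$ is a set this goal is a proposition, so all propositional truncations appearing below can be stripped freely. Applying hypothesis (\ref{eq:lift_crit}) to $\tproj 0 {p \ct \opp q}$ yields a loop $\gamma : \Omega(\sum_X F, (x_0,u_0))$ with $\tproj 0 {\fst_*(\gamma)} = \tproj 0 {f_*(p \ct \opp q)}$ in $\pi_1(X,x_0)$\,. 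This $0$-truncated equality unfolds to $\brck{\fst_*(\gamma) = f_*(p \ct \opp q)}$, which strips to an honest path since the goal is a proposition. Decomposing $\gamma$ via the characterization of paths in $\sum_X F$ produces a loop $p_\gamma : x_0 =_X x_0$ with $\fst_*(\gamma) = p_\gamma$ together with a witness $\transfib F {p_\gamma}{u_0} = u_0$\,, and transporting $u_0$ along the composite $f_*(p \ct \opp q) = p_\gamma$ completes the proof.
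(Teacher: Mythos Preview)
Your proof is correct and follows the same underlying ideas as the paper: the backward direction is extension by weak constancy with the key reduction to $\transfib F {f_*(p\ct\opp q)}{u_0}=u_0$, which you discharge by stripping truncations and decomposing a loop in $\sum_X F$ into base and fiber components---exactly the content of the paper's Lemma~\ref{lem:eqv-lift-crit} (direction (i)$\Rightarrow$(ii)) inlined into the construction from Theorem~\ref{thm:lift-crit-2}.

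There are two organizational differences worth noting. First, the paper separates the argument: it proves Lemma~\ref{lem:eqv-lift-crit} (equivalence between the subgroup condition~(\ref{eq:lift_crit}) and the dependent-loop condition) and then proves Theorem~\ref{thm:lift-crit-2} for the latter condition; you prove Theorem~\ref{thm:lift-crit-direct} directly by merging these. Second, in the backward direction the paper begins with path induction on $w_f$, reducing to $x_0\jdeq f(y_0)$ and $w_f\jdeq\refl{}$, which lets it drop the $\transfib F {\opp{w_f}}{\blank}$ correction you carry throughout; your version handles the general $w_f$ explicitly, which is slightly heavier notationally but avoids the induction. For the forward direction, your factorization $f=\fst\circ g$ with $g(y)\defeq(f(y),\widetilde f(y))$ is cleaner than the paper's route, which instead produces the dependent loop via $\apdfunc{\widetilde f}(p)$ and then (in Lemma~\ref{lem:eqv-lift-crit}, (ii)$\Rightarrow$(i)) reassembles it into a loop in the total space; your packaging gets functoriality of $\pi_1$ to do that reassembly for you. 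One small point to make explicit: the equality $f=\fst\circ g$ \emph{as pointed maps} relies on the coherence $\apfunc{\fst}(\mathsf{pair}^{=}(w_f,\ldots))=w_f$ from the characterization of paths in $\Sigma$-types, which you should cite when formalizing.
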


In proving Lemma \ref{thm:lift-crit-direct}, we found criterion (\ref{eq:lift_crit}) inconvenient to work with. The notation $f_*(\pi_1(Y,y_0))$ conceals multiple truncations --- a propositional-truncation to define the image of a map and a set-truncation for $\pi_1$ --- which hinder access to the paths themselves. Furthermore, Hou and Harper's definition of covering spaces does not involve the (pointed) total space $(\textstyle \sum_X F, (x_0,u_0))$\,, so we would prefer to have a version of criterion (\ref{eq:lift_crit}) that does not use it either.

Upon closer inspection, criterion (\ref{eq:lift_crit}) can be expressed more succinctly as a statement involving dependent paths. The criterion basically says that for any loop in type $x_0 =_X x_0$ of the form $f_*(p)$ with $p : y_0 =_Y y_0$\,, there exists a loop of type $(x_0, u_0) = (x_0, u_0)$ in the total space lying over $f_*(p)$\,. In HoTT, the latter is interpreted as the existence of a dependent loop
\[
    \transfib{F}{f_*(p)}{u_0} =_{F(x_0)} u_0~.
\]
This gives us an equivalent alternative to criterion (\ref{eq:lift_crit}) as stated in the lemma below. Note that the set-truncated $\pi_1$'s are also replaced by untruncated path types. This is possible because both statements are propositions.

\begin{lemma} \label{lem:eqv-lift-crit}
    Let $(F,u_0)$ be a pointed covering space over a pointed type $(X,x_0)$ and let $f:(Y,y_0) \pto (X,x_0)$ be a pointed map. The following propositions are equivalent:
    \begin{enumerate}[(i)]
        \item $f_*(\pi_1 (Y,y_{0})) \subset {\fst}_*(\pi_1(\textstyle \sum_X F, (x_0,u_0)))
        $\,;
        \item for all loops $p : y_0 =_Y y_0$ there exists a dependent loop of type\\ \phantom{~~}$(\transfib{F}{f_*(p)}{u_0} =_{F(x_0)} u_0)$\,.
    \end{enumerate}
\end{lemma}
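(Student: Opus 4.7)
The plan is to establish the equivalence by a chain of equivalent reformulations of $(i)$, using at every step the fact that both sides are propositions. In particular, because $F(x_0)$ is a set, the dependent loop type $\transfib{F}{f_*(p)}{u_0} =_{F(x_0)} u_0$ appearing in $(ii)$ is already a proposition, so the ``for all $p$, there exists'' in $(ii)$ is itself a proposition regardless of whether the ``there exists'' is interpreted as $\sum$ or as $\brck{\sum\cdot}$.

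First I would unfold the subset condition. Lying in the image $\fst_*(\pi_1(\sum_X F, (x_0,u_0)))$ is a propositional-truncation statement, so $(i)$ reads: for every $g : \pi_1(Y, y_0)$, $\brck{\sum_{\alpha : \pi_1(\sum_X F, (x_0,u_0))} \fst_*(\alpha) = f_*(g)}$. Because this is a proposition in $g$, truncation induction lets me assume $g \jdeq \tproj 0 p$ for an arbitrary $p : y_0 =_Y y_0$. Stripping the set-truncation on $\alpha$ in the same way, and using that $\fst$ is pointed by $\refl{x_0}$ (since $\fst(x_0,u_0) \jdeq x_0$) so that $\fst_*$ acts on a loop simply as $\apfunc{\fst}$, I reduce to: for every $p$, $\brck{\sum_{\ell : \Omega(\sum_X F, (x_0,u_0))} \tproj 0 {\apfunc{\fst}(\ell)} = \tproj 0 {f_*(p)}}$.

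Next, the inner equality of set-truncations unfolds to $\brck{\apfunc{\fst}(\ell) = f_*(p)}$, and the two nested propositional truncations collapse into one. By the standard characterization of paths in a sigma type \cite[Thm.\,2.7.2]{hottbook}, a loop $\ell$ based at $(x_0, u_0)$ in the total space corresponds to a pair $(q, w)$ with $q : x_0 =_X x_0$ and $w : \transfib{F}{q}{u_0} = u_0$, and under this correspondence $\apfunc{\fst}(\ell) = q$. Substituting, $(i)$ becomes: for every $p$, $\brck{\sum_{q} \sum_{w}(q = f_*(p))}$. Reordering the outer sums and contracting the based path-space $\sum_{q}(q = f_*(p))$ (centered at $(f_*(p), \refl{f_*(p)})$) collapses this to $\brck{\transfib{F}{f_*(p)}{u_0} = u_0}$, and since the inner type is already a proposition, the outer $\brck{\cdot}$ is redundant, yielding precisely $(ii)$.

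The main obstacle is purely bookkeeping: keeping careful track of which truncations are set-truncations (from $\pi_1$), which are propositional (from the image), and at what point univalence is implicitly used; every stripping step must be justified by the current goal being of sufficiently low truncation level. The setiness of $F(x_0)$, baked into the definition of a covering space, is what ultimately makes the entire chain collapse down to the concrete dependent-loop statement of $(ii)$.
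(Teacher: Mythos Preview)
Your proof is correct. The paper proves the two implications separately: in each direction it introduces an element, strips the relevant truncations one at a time (justified because the current goal is a proposition), and then invokes the characterisation of paths in a $\Sigma$-type to pass between a loop in the total space and a pair consisting of a base loop and a dependent loop. You instead run a single chain of type-level equivalences and use contractibility of the based path space $\sum_{q}(q = f_*(p))$ to collapse the existential over $q$ in one stroke, which spares you from constructing explicit witnesses in both directions. The ingredients are identical (Theorems~2.7.2 and~7.3.12 of \cite{hottbook}, together with the fact that $F(x_0)$ is a set so the dependent-loop type is already a proposition); your packaging is simply more economical, at the price of assuming the reader is comfortable with $\Sigma$-reordering and singleton-contraction manoeuvres. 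One small remark: univalence plays no role in this lemma, so there is no need to track its use.
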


The proof is a bit technical and involves a lot of truncations. We deal with these now so we do not have to deal with them when proving the lifting criterion.
\begin{proof}
    Since both conditions are propositions, it suffices to show that both statements imply each other.
    
    Assume that $f_*(\pi_1(Y,y_0)) \subset {\fst}_*(\pi_1(\textstyle \sum_X F, (x_0,u_0)))$ and let $p : y_0 =_Y y_0$\,. Naturally, the truncated loop $f_*(\tproj 0 p)$ lies in $f_*(\pi_1(Y,y_0))$\,, so by assumption it also lies in ${\fst}_*(\pi_1(\textstyle \sum_X F, (x_0,u_0)))$\,, meaning that there \emph{merely} exists a $q' : \pi_1(\sum_X F, (x_0, u_0))$ such that ${\fst}_*(q') = f_*(\tproj{0}{p})$\,. The goal is to show that $(\transfib{F}{f_*(p)}{u_0} =_{F(x_0)} u_0)$\,, and since $F(x_0)$ is a set, this is a proposition. Hence, we can strip the `merely' in `merely exists' and we can strip the truncation from $\pi_1(\sum_X F, (x_0, u_0)) \jdeq \trunc 0 {(x_0,u_0) = (x_0, u_0)}$ to obtain an explicit loop $q : (x_0,u_0) = (x_0, u_0)$ which satisfies ${\fst}_*(\tproj{0}{q}) = f_*(\tproj{0}{p})$\,.
    As $q$ is a loop in the total space, it can be split into its projection onto the base space $\apfunc{\fst}(q) : x_0 =_X x_0$ and a dependent loop of type $(\transfib{F}{\apfunc{\fst}(q)}{u_0} =_{F(x_0)} u_0)$ in the fiber $F(x_0)$\,. Therefore, to show that $(\transfib{F}{f_*(p)}{u_0} =_{F(x_0)} u_0)$\,, it suffices to show that 
    \[
        \apfunc{\fst}(q) = f_*(p)~.
    \]
    This follows from the fact that $q$ satisfies ${\fst}_*(\tproj{0}{q}) = f_*(\tproj{0}{p})$\,: using the definitions of the induced maps, this equality can be rewritten as $\tproj{0}{{\fst}_*(q)} = \tproj{0}{f_*(p)}$\,, which by Theorem~7.3.12 in \cite{hottbook}, is equivalent to $\brck{{\fst}_*(q) = f_*(p)}$\,. The truncation can be stripped, and the result implies that
    \[
        \apfunc{\fst}(q) = \refl{x_0}^{-1} \ct \apfunc{\fst}(q) \ct \refl{x_0} \jdeq {\fst}_*(q) = f_*(p)~.
    \]

    Conversely, assume that for all loops $p : y_0 =_Y y_0$ there exists a dependent path of the form $(\transfib{F}{f_*(p)}{u_0} =_{F(x_0)} u_0)$\,. Let $r : \pi_1(X,x_0)$ and assume that $r$ lies in the image $f_*(\pi_1(X,x_0))$\,, meaning that there merely exists a $p' : \pi_1(Y,y_0)$ such that $f_*(p') = r$\,. The goal is to show the proposition that $r'$ lies in ${\fst}_*(\pi_1(\textstyle \sum_X F, (x_0,u_0)))$\,, so we can again strip the truncations in `the mere existence of $p'$' to obtain an explicit loop $p : y_0 =_Y y_0$ that satisfies $f_*(\tproj 0 p) = r$\,. To show that $r$ lies in ${\fst}_*(\pi_1(\textstyle \sum_X F, (x_0,u_0)))$\,, it suffices to construct a loop $q : (x_0, u_0) = (x_0,u_0)$ that satisfies ${\fst}_* (\tproj 0 q) = r$\,. Since $r = f_*(\tproj 0 p)$\,, this latter property can be replaced by ${\fst}_* (\tproj 0 q) = f_*(\tproj 0 p)$\,. From here we can ignore the loop~$r$\,, making this part of the proof more similar to the previous part.
    To construct a loop $q : (x_0, u_0) = (x_0,u_0)$ it suffices to provide a loop $q_1 : x_0 =_X x_0$ in the base space and a dependent loop of type $(\transfib{F}{q_1}{u_0} =_{F(x_0)} u_0)$ over $q_1$\,. Choose $q_1 \defeq f_*(p)$\,, then the dependent loop of type $(\transfib{F}{f_*(p)}{u_0} =_{F(x_0)} u_0)$ exists by assumption. Furthermore, by construction it holds that $\apfunc \fst (q) \jdeq q_1 \jdeq f_*(p)$\,, so it holds that
    \[
        {\fst}_* (\tproj 0 q) \jdeq \mathopen{}|\refl{x_0}^{-1} \ct \apfunc{\fst}(q) \ct \refl{x_0} |_0 \mathclose{} = \mathopen{}| \apfunc \fst (q) |_0\mathclose{} = \tproj 0 {f_*(p)} \jdeq f_*(\tproj 0 p)~.
    \]  
\end{proof}

Replacing criterion (\ref{eq:lift_crit}) in the direct translation of the lifting criterion (Theorem \ref{thm:lift-crit-direct}) with proposition (ii) from Lemma \ref{lem:eqv-lift-crit}, we obtain a version of the lifting criterion that is better suited to the language of homotopy type theory. We prove this version and, hence, also the direct translation. The proof closely follows the argument used by Hatcher~\cite{hatcherAT}.

\begin{theorem}[Lifting criterion, cf.~{\cite[Prop.\,1.33]{hatcherAT}}] \label{thm:lift-crit-2}
    Let $(F,u_0)$ be a pointed covering space over a pointed type $(X,x_0)$\,. A pointed map $f : (Y, y_0) \pto (X,x_0)$\,, with $Y$ a connected type, can be lifted to a pointed lift $\widetilde{f} : \prod_{y : Y} F(f(y_0))$ if and only if for all loops $p : y_0 =_Y y_0$ there exists dependent loop
    \[
        \transfib{F}{f_*(p)}{u_0} =_{F(x_0)} u_0 ~,
    \]
    meaning that there exists a loop from $u_0$ to $u_0$ over $f_*(p)$ in $F$\,. 
\end{theorem}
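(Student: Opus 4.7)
The plan is to prove both directions by direct construction, with the backward direction requiring extension by weak constancy to handle the truncated path type arising from connectedness of $Y$.

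For the forward direction, I assume a pointed lift $\widetilde{f} : \prod_{y:Y} F(f(y))$ with $\transfib{F}{w_f}{\widetilde{f}(y_0)} = u_0$. Given $p : y_0 =_Y y_0$, the dependent application $\apdfunc{\widetilde{f}}(p)$ yields a dependent path $\transfib{F \circ f}{p}{\widetilde{f}(y_0)} = \widetilde{f}(y_0)$, which after the standard rewrite becomes $\transfib{F}{\apfunc{f}(p)}{\widetilde{f}(y_0)} = \widetilde{f}(y_0)$. Using $f_*(p) \jdeq w_f^{-1} \ct \apfunc{f}(p) \ct w_f$ and the functoriality of transport, one then computes $\transfib{F}{f_*(p)}{u_0} = \transfib{F}{w_f}{\transfib{F}{\apfunc{f}(p)}{\transfib{F}{w_f^{-1}}{u_0}}}$, and since $\transfib{F}{w_f^{-1}}{u_0} = \widetilde{f}(y_0)$ (inverting the pointedness witness), this collapses to $u_0$.

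For the backward direction, I need to construct $\widetilde{f}(y) : F(f(y))$ for each $y : Y$. Since $Y$ is connected, I have $\ast : \brck{y_0 =_Y y}$. The plan is to define a non-truncated map
\[
    w : (y_0 =_Y y) \to F(f(y)), \qquad w(q) \defeq \transfib{F}{w_f^{-1} \ct \apfunc{f}(q)}{u_0}\,,
\]
and extend it via Lemma \ref{lem:ext_weak_constancy} (extension by weak constancy) to $\overline{w} : \brck{y_0 =_Y y} \to F(f(y))$, which is possible because $F(f(y))$ is a set. Then I set $\widetilde{f}(y) \defeq \overline{w}(\ast)$. For the pointedness, taking $q \defeq \refl{y_0}$ gives $w(\refl{y_0}) = \transfib{F}{w_f^{-1}}{u_0}$, and transporting along $w_f$ recovers $u_0$.

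The main obstacle is verifying weak constancy: showing $w(q_1) = w(q_2)$ for any two paths $q_1, q_2 : y_0 =_Y y$. The loop $q_1 \ct q_2^{-1} : y_0 =_Y y_0$ feeds into the hypothesis to yield a dependent loop $\transfib{F}{f_*(q_1 \ct q_2^{-1})}{u_0} = u_0$ in $F(x_0)$. Expanding $f_*(q_1 \ct q_2^{-1})$ into $w_f^{-1} \ct \apfunc{f}(q_1) \ct \apfunc{f}(q_2)^{-1} \ct w_f$ and decomposing the transport, this dependent loop rearranges to $\transfib{F}{w_f^{-1} \ct \apfunc{f}(q_1)}{u_0} = \transfib{F}{w_f^{-1} \ct \apfunc{f}(q_2)}{u_0}$, which is exactly $w(q_1) = w(q_2)$. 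The bookkeeping around the interplay of $w_f$, $f_*$, and transport concatenation is where the proof requires the most care; everything else is mechanical once the correct definition of $w$ is in place.
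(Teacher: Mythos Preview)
Your proof is correct and follows the same approach as the paper: extension by weak constancy for the construction of $\widetilde{f}$, the loop $q_1 \ct q_2^{-1}$ to verify constancy, $\refl{y_0}$ for pointedness, and $\apdfunc{\widetilde{f}}(p)$ for the converse. The one simplification the paper makes that you miss is to perform path induction on $w_f : f(y_0) =_X x_0$ at the very start (legitimate since $x_0$ is a free endpoint), reducing to $x_0 \jdeq f(y_0)$ and $w_f \jdeq \refl{f(y_0)}$ so that $f_*(p)$ collapses to $\apfunc{f}(p)$; this eliminates precisely the $w_f$-transport bookkeeping you flag as the most delicate part.
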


\begin{proof}
Before we start, note that the point $x_0 : X$ is arbitrary, and hence we can perform path induction on the path $w_f : f(y_0) =_X x_0$ that encodes $f$'s pointedness. So assume that we have a judgmental equality $x_0 \jdeq f(y_0)$\,. This lets us replace every occurrence of $x_0$ in the theorem's statement by $f(y_0)$\,. Moreover, we also get to assume that the path $w_f$ itself is judgmentally equal to the constant path, i.e. $w_f \jdeq \refl{f(y_0)}$\,.

Assume that there exists a dependent loop of type $(\transfib{F}{f_*(p)}{u_0}=_{F(x_0)}u_0)$ for every loop $p : y_0 =_Y y_0$\,.
Let $y :Y$\,. Since $Y$ is connected, the mere path type $\brck{y_0 =_Y y}$ is inhabited, which we use to define the lift $\widetilde{f}(y)$ like Hatcher:
first, we take an arbitrary path $q : y_0 =_Y y$ to construct $\widetilde{f}(y)$ by transporting the point $u_0 : F(x_0)$ to the fiber $F(f(y))$ along the path $\apfunc{f}(q) : f(y_0) = f(y)$\,; we then show that $\widetilde{f}(y)$ is well-defined, i.e. that the resulting construction did not depend on the specific choice of path $q : y_0 =_Y y$\,. This way of constructing $\widetilde{f}(y)$ is justified because of extension by weak constancy (Lemma \ref{lem:ext_weak_constancy}) as explained in Section~\ref{sec:dealing-truncations}.

Let $q : y_0 =_Y y$ be an arbitrary path; this gives a path $\apfunc f (q) : f(y_0) =_X f(y)$. We then define the lift $\widetilde f (y)$ as
\[
    \widetilde f (y) \defeq \transfib{F}{\apfunc{f}(q)}{u_0} : F(f(y))~.
\]
It remains to show that this construction does not depend on the choice of path of type $y_0 =_Y y$\,. Let $q_1, q_2 : y_0 =_Y y$\,, we need to show that
\[
    \transfib{F}{\apfunc{f}(q_1)}{u_0} = \transfib{F}{\apfunc{f}(q_2)}{u_0}~.
\]
We again follow Hatcher's proof. Remark that $\apfunc{f}(q_1)\ct\apfunc{f}(q_2)^{-1}$ is a loop in the type $f(y_0) =_X f(y_0)$ of the form $f_*(p)$ for some $p : y_0 =_Y y_0$\,, namely
\[
    \apfunc{f}(q_1) \ct \apfunc{f}(q_2)^{-1} = \apfunc{f}(q_1 \ct q_2^{-1}) = \refl{f(y_0)}^{-1} \ct \apfunc{f}(q_1 \ct q_2^{-1}) \ct \refl{f(y_0)} \jdeq f_*(q_1 \ct q_2^{-1})~.
\]
Hence, by assumption we have a dependent loop over $\apfunc{f}(q_1) \ct \apfunc{f}(q_2)^{-1}$\,,
\[
    \transfib{F}{\apfunc{f}(q_1) \ct \apfunc{f}(q_2)^{-1}}{u_0} =_{F(x_0)} u_0~.
\]
Transporting both sides back along $\apfunc{f}(q_2)$\,, we see that $\apfunc{f}(q_1)$ and $\apfunc{f}(q_2)$ both send $u_0$ to the same point in $F(f(y))$\,:
\[
    \transfib{F}{\apfunc{f}(q_1)}{u_0} = \transfib{F}{\apfunc{f}(q_2)}{u_0}~.
\]
This concludes the construction of the lift $\widetilde f(y)$\,. 

Since $w_f \jdeq \refl{f(y_0)}$\,, proving that $\widetilde{f}$ is a pointed lift reduces to showing that $\widetilde{f}(y_0) = u_0$\,. To compute $\widetilde{f}(y_0)$\,, we utilize that we have access to an explicit loop $\refl{y_0} : y_0 =_Y y_0$\,. This gives us that
    \begin{align*}
    \widetilde{f}(y_0) = \transfib{F}{\apfunc{f}(\refl{y_0})}{u_0} \jdeq \transfib{F}{\refl{f(y_0)}}{u_0} \jdeq u_0~.
    \end{align*}

Conversely, assume that a pointed lift $\widetilde{f}$ exists. Let $p : y_0 =_Y y_0$\,, then we need to show that there exists a dependent loop of type $(\transfib{F}{f_*(p)}{u_0} =_{F(x_0)} u_0)$\,. The path $p$ gives rise to a dependent equality between $\widetilde f (y_0)$ and itself over $p$ in the family $F \circ f$\,, namely
\[
    \apdfunc{\widetilde f}\,(p) : \transfib{F \circ f}{p}{\widetilde{f}(y_0)} =_{F(f(y_0))} \widetilde{f}(y_0)~.
\]
By Lemma 2.3.10 in the HoTT book \cite{hottbook}, transport along $p$ in $F \circ f$ equals transport along~$\apfunc f (p)$ in $F$\,, so we get a dependent loop at $\widetilde f(y_0)$ in $F$\,,
\[
    \transfib{F}{\apfunc{f}{(p)}}{\widetilde{f}(y_0)} =_{F(f(y))} \transfib{F \circ f}{p}{\widetilde{f}(y_0)} =_{F(f(y))} \widetilde{f}(y_0)~.
\]
Since $\widetilde{f}$ is pointed, i.e. $\widetilde{f}(y_0) = u_0$\,, this gives the dependent loop over $f_*(p)$ that was sought:
\begin{align*}
    &\transfib{F}{f_*(p)}{u_0} \jdeq \transfib{F}{\refl{f(y_0)}^{-1} \ct \apfunc f (p) \ct \refl{f(y_0)}}{u_0} \\
    &~~~~~~= \transfib F {\apfunc f (p)}{u_0} = \transfib{F}{\apfunc f (p)}{\widetilde{f}(y_0)} = \widetilde f (y_0) = u_0~.
\end{align*}
\end{proof}

\subsection{Classification theorem}

After translating the lifting criterion, we are ready to state and prove the classification of covering spaces in homotopy type theory. The theorem is given below. Note how the subgroup associated to a covering space is not defined as the image of the covering map, but is given in terms of dependent loops, like with the reformulated lifting criterion in Theorem~\ref{thm:lift-crit-2}.

\begin{theorem}[Classification, cf.~{\cite[first half of Thm.\,1.38]{hatcherAT}}] \label{thm:classification}
    Let $(X,x_0)$ be a pointed type, then there exists an equivalence between pointed, connected covering spaces $(F,u_0)$ over $(X,x_0)$ and subgroups of $\pi_1(X,x_0)$\,, obtained by associating to $(F,u_0)$ the subgroup $H_{(F,u_0)}$ given by
    \[
        H_{(F,u_0)} (\tproj 0 p) \defeq (\transfib{F}{p}{u_0} =_{F(x_0)} u_0 )~,
    \]
    meaning that $\tproj{0}{p} : \pi_1(X,x_0)$ belongs to $H_{(F,u_0)} : \pi_1(X,x_0) \to \prop$ if there exists a loop from~$u_0$ to $u_0$ lying over $p$ in $F$\,.
\end{theorem}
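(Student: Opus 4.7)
The plan is to prove the equivalence by constructing an explicit inverse and verifying both round-trips, closely mirroring Hatcher's classical argument. The key ingredients are the characterization of equality between pointed covering spaces (Lemma \ref{lem:char_pcov}), the functoriality of transport, and a set-quotient construction for the inverse direction.

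First I would check that $H_{(F,u_0)}$ is a well-defined subgroup of $\pi_1(X,x_0)$. Well-definedness on the set-truncation is automatic since $F(x_0)$ is a set, so $(\transfib F p {u_0} = u_0)$ is a proposition. Closure under composition follows from $\transfib F {p \ct q}{u_0} = \transfib F q {\transfib F p {u_0}}$, closure under inverses from the fact that transport along $p^{-1}$ inverts transport along $p$, and the identity case is trivial. To define the inverse, given a subgroup $H \subset \pi_1(X,x_0)$, I would build a covering space $F_H : X \to \set$ whose fiber $F_H(x)$ is the set-quotient of $\trunc 0 {x_0 = x}$ by the equivalence relation generated by $\tproj 0 p \sim \tproj 0 {p \ct h}$ for loops $h$ with $\tproj 0 h \in H$, with designated point $u_H \defeq [\tproj 0 {\refl{x_0}}]$.

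One round-trip, $H \mapsto H_{(F_H, u_H)}$, is straightforward: unfolding, $H_{(F_H, u_H)}(\tproj 0 p)$ asserts $[\tproj 0 p] = [\tproj 0 {\refl{x_0}}]$ in $F_H(x_0)$, which by construction is equivalent to $\tproj 0 p \in H$. The other round-trip requires producing, via Lemma \ref{lem:char_pcov}, a basepoint-preserving fiberwise equivalence between $F_{H_{(F,u_0)}}$ and $F$. The natural candidate is $h(x,[\tproj 0 p]) \defeq \transfib F p {u_0}$, which is well-defined on $H_{(F,u_0)}$-equivalence classes by the very definition of $H_{(F,u_0)}$, and basepoint preservation $h(x_0, u_{H_{(F,u_0)}}) = u_0$ is immediate.

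The main obstacle is showing that $h(x, \blank)$ is an equivalence at every $x$. Injectivity is essentially definitional: if $\transfib F {p_1}{u_0} = \transfib F {p_2}{u_0}$ then the loop $p_1 \ct p_2^{-1}$ lies in $H_{(F,u_0)}$, so $[\tproj 0 {p_1}] = [\tproj 0 {p_2}]$ in the quotient. Surjectivity is where connectedness enters: interpreting a connected covering space as one whose total space $\sum_X F$ is connected, for any $v : F(x)$ there merely exists a path $(x_0, u_0) = (x, v)$, which decomposes into a mere path $p : x_0 =_X x$ together with a dependent witness $\transfib F p {u_0} = v$. Promoting this mere data to an actual preimage $[\tproj 0 p] : F_{H_{(F,u_0)}}(x)$ requires extension by weak constancy (Lemma \ref{lem:ext_weak_constancy}), since the target $\brck{\hfib{h(x,\blank)}{v}}$ is a proposition and any two candidate preimages coming from different paths differ by an element of $H_{(F,u_0)}$, hence are equal in the quotient. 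Carefully navigating the three layers of truncation --- the mere path from connectedness, the $0$-truncation inside the fibers, and the set-quotient by $H$ --- is the delicate bookkeeping of the argument.
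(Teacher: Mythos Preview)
Your proposal is correct and the underlying constructions coincide with the paper's, but the \emph{organization} of the argument differs in a way worth noting. The paper does not verify both round-trips of an explicit inverse; instead it shows the forward map $(F,u_0)\mapsto H_{(F,u_0)}$ is an equivalence by proving it surjective and injective. Surjectivity uses exactly your quotient construction $F_H(x)\defeq\trunc{0}{x_0=x}/\sim_x$ with $q_1\sim_x q_2\defeq H(q_1\ct q_2^{-1})$, and the check $H_{(F_H,u_H)}=H$ is the same as your first round-trip. For injectivity, however, the paper first isolates two standalone lemmas (Lemmas~\ref{lem:fiberwise_map_cov} and~\ref{lem:fiberwise_map_unique}), which play the role of the classical lifting criterion and unique lifting property for covering spaces presented as families of sets: the first produces a basepoint-preserving fiberwise map $\prod_x F_1(x)\to F_2(x)$ whenever the subgroup inclusion holds, the second says any two such maps agreeing at the basepoint are equal. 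Injectivity then follows formally by building maps in both directions and invoking uniqueness to see they are mutual inverses. Your approach instead builds the specific equivalence $F_{H_{(F,u_0)}}\simeq F$ by hand, checking injectivity and surjectivity of $h(x,[\tproj 0 p])\defeq\transfib F p{u_0}$ fiberwise; this is essentially the content of Lemma~\ref{lem:fiberwise_map_cov} inlined, and your injectivity check plays the role of Lemma~\ref{lem:fiberwise_map_unique}. The paper's modular decomposition has the advantage that Lemmas~\ref{lem:fiberwise_map_cov} and~\ref{lem:fiberwise_map_unique} are of independent interest as the ``right'' HoTT analogs of classical covering-space lemmas; your direct approach is more self-contained and makes the inverse map explicit. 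One minor remark: for surjectivity of $h(x,\blank)$ you only need the \emph{mere} fiber $\brck{\hfib{h(x,\blank)}{v}}$, which is already a proposition, so ordinary truncation-elimination suffices and extension by weak constancy is not actually required there.
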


The theorem classifies \emph{connected} covering spaces. Again, we follow Hou and Harper \cite{favonia-covspaces}'s definition:

\begin{definition}
    A covering space $F : X \to \set$ is called \textbf{\emph{connected}} if its total space $\sum_X F$ is connected.
\end{definition}

Both the HoTT proof and the classical proof consists of two parts: showing that the association in Theorem \ref{thm:classification} is injective and surjective. Surjectivity is shown using the universal covering space $P$ constructed by Hou and Harper \cite{favonia-covspaces}. To show injectivity, the classical proof uses the lifting criterion to construct maps between covering spaces $\widetilde{X_1}$ and $\widetilde{X_2}$ by lifting the respective projections:
\[
    \begin{tikzcd}
        \widetilde{X_1} \arrow[r, dashed, "\widetilde{p_1}"] \arrow[rd, "p_1"'] 
            & \widetilde{X_2} \arrow[r, dashed, "\widetilde{p_2}"] \arrow[d, "p_2"] & \widetilde{X_1} \arrow[ld, "p_1"] \\
            & X
    \end{tikzcd}
\]
Unfortunately, we cannot use the lifting criterion for the same purpose in the HoTT-setting, as the codomains of the maps we wish to construct are no longer spaces like $\widetilde{X_i}$\,, but families $F_i : X \to \set$\,. Therefore, we prove two new lemmas that give conditions for the existence and uniqueness of maps between covering spaces defined as families of sets. They correspond to the lifting criterion and the unique lifting property in the classical theory.

\begin{lemma} \label{lem:fiberwise_map_cov}
    Let $(F_1,u_1)$ and $(F_2,u_2)$ be pointed covering spaces over a pointed type $(X,x_0)$ with $F_1$ connected. Then there exists a fiberwise map $h : \prod_{x:X} F_1(x) \to F_2(x)$ that preserves the basepoint, meaning that $h(x_0, u_1) = u_2$\,, if and only if for all loops $p : x_0 =_X x_0$ we have
    \[
         ( \transfib{F_1}{p} {u_1} =_{F_1(x_0)} u_1 ) ~~ \longrightarrow ~~ ( \transfib{F_2}{p}{u_2} =_{F_2(x_0)} u_2 )~,
    \]
    meaning that existence of a loop from $u_1$ to $u_1$ over $p$ in $F_1$ implies existence of a loop from~$u_2$ to $u_2$ over $p$ in $F_2$\,.
\end{lemma}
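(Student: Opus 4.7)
The plan is to treat the two directions separately, using Lemma~\ref{lem:comm_with_transport} for the easy direction and a construction mirroring the one used for Theorem~\ref{thm:lift-crit-2} for the hard direction, but substituting the connectedness of $\sum_X F_1$ for that of the domain.

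For the forward direction ($\Rightarrow$), given a basepoint-preserving fiberwise map $h$ and a dependent loop $\transfib{F_1}{p}{u_1} = u_1$ over some loop $p : x_0 =_X x_0$, I would apply Lemma~\ref{lem:comm_with_transport} to the family $h_{(\blank)} : \prod_x F_1(x) \to F_2(x)$ to commute $h$ past transport, giving
\[
    \transfib{F_2}{p}{u_2} = \transfib{F_2}{p}{h(x_0, u_1)} = h(x_0, \transfib{F_1}{p}{u_1}) = h(x_0, u_1) = u_2~.
\]

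For the backward direction ($\Leftarrow$), fix $(x, u) : \sum_X F_1$. Since $\sum_X F_1$ is connected, the mere path type $\brck{(x_0, u_1) = (x, u)}$ is inhabited. An explicit witness $\widetilde{q} : (x_0, u_1) = (x, u)$ corresponds to a pair $(q, \alpha)$ with $q : x_0 =_X x$ and $\alpha : \transfib{F_1}{q}{u_1} = u$, and I would tentatively set $h(x, u) \defeq \transfib{F_2}{q}{u_2}$. Because $F_2(x)$ is a set, extension by weak constancy (Lemma~\ref{lem:ext_weak_constancy}) validates this definition once the construction is shown to be independent of the choice of $\widetilde{q}$. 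Basepoint preservation then follows by evaluating on $\refl{(x_0, u_1)}$, which corresponds to $(\refl{x_0}, \refl{u_1})$, giving $h(x_0, u_1) = \transfib{F_2}{\refl{x_0}}{u_2} \jdeq u_2$.

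The main obstacle is the weak-constancy check. Given two explicit witnesses $(q_1, \alpha_1)$ and $(q_2, \alpha_2)$, the composite $q_1 \ct q_2^{-1}$ is a loop at $x_0$, and combining $\alpha_1$ and $\alpha_2$ via the functoriality of transport in $F_1$ produces a dependent loop at $u_1$ over it:
\[
    \transfib{F_1}{q_1 \ct q_2^{-1}}{u_1} = \transfib{F_1}{q_2^{-1}}{\transfib{F_1}{q_1}{u_1}} = \transfib{F_1}{q_2^{-1}}{u} = u_1~.
\]
The hypothesis applied to $p \defeq q_1 \ct q_2^{-1}$ then supplies a dependent loop $\transfib{F_2}{q_1 \ct q_2^{-1}}{u_2} = u_2$ in $F_2$, which, after unwinding the composite transport and transporting once more along $q_2$, rearranges to $\transfib{F_2}{q_1}{u_2} = \transfib{F_2}{q_2}{u_2}$, closing the weak-constancy argument.
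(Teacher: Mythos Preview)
Your proposal is correct and follows essentially the same approach as the paper: the forward direction uses Lemma~\ref{lem:comm_with_transport} to commute $h$ past transport, and the backward direction defines $h(x,u)$ as $\transfib{F_2}{q}{u_2}$ via extension by weak constancy, with the well-definedness check handled by the loop $q_1 \ct q_2^{-1}$ and the hypothesis, and pointedness obtained from the explicit path $\refl{x_0}$. The only difference is that the paper presents the two directions in the opposite order.
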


\begin{lemma}\label{lem:fiberwise_map_unique}
    Let $(F_1,u_1)$ and $(F_2,u_2)$ be pointed covering spaces over a pointed type $(X,x_0)$ with $F_1$ connected. Then any two fiberwise maps $h_1, h_2 : \prod_{x : X} F_1(x) \to F_2(x)$ are equal if they coincide in a single point, e.g. if $h_1(x_0, u_1) = h_2(x_0, u_1)$\,.
\end{lemma}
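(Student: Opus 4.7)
The plan is to reduce the equality of fiberwise maps to a pointwise statement, use that $F_2(x)$-valued equality is a proposition to strip truncations, and then exploit the connectedness of the total space $\sum_X F_1$ to transport the single point of agreement everywhere.

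First, by two applications of function extensionality it suffices to show that $h_1(x,v) = h_2(x,v)$ for every $x:X$ and every $v : F_1(x)$. Fix such an $(x,v)$. Since $F_2(x)$ is a set, the goal $h_1(x,v) = h_2(x,v)$ is a proposition. The hypothesis that $F_1$ is connected means that $\sum_X F_1$ is connected, so the mere path type $\brck{(x_0,u_1) = (x,v)}$ is inhabited, and because the goal is a proposition we may strip this truncation and assume an explicit path $\widetilde{p} : (x_0,u_1) = (x,v)$ in $\sum_X F_1$. By the characterization of paths in a total space, $\widetilde{p}$ decomposes as a pair $(p, w_p)$ consisting of $p : x_0 =_X x$ together with a dependent path $w_p : \transfib{F_1}{p}{u_1} =_{F_1(x)} v$.

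Now I would use that transport commutes with fiberwise maps (Lemma \ref{lem:comm_with_transport}) applied to each $h_i$: for $i \in \{1,2\}$,
\[
    \transfib{F_2}{p}{h_i(x_0, u_1)} = h_i\bigl(x, \transfib{F_1}{p}{u_1}\bigr)~.
\]
Rewriting along $w_p$ on the right-hand sides gives $\transfib{F_2}{p}{h_i(x_0, u_1)} = h_i(x, v)$ for $i = 1, 2$. Combining these two equalities with the hypothesis $h_1(x_0, u_1) = h_2(x_0, u_1)$ (which, transported along $p$, yields $\transfib{F_2}{p}{h_1(x_0, u_1)} = \transfib{F_2}{p}{h_2(x_0, u_1)}$) produces the desired equation $h_1(x,v) = h_2(x,v)$.

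The main obstacle I anticipate is bookkeeping rather than mathematical: one has to be careful that the proposition-stripping step really does apply (which it does because the target equality lives in the set $F_2(x)$), and that the decomposition of $\widetilde{p}$ together with Lemma \ref{lem:comm_with_transport} is threaded through both $h_1$ and $h_2$ in a symmetric way so that the hypothesis at the single basepoint can be slotted in cleanly. Once this is set up, no further truncation gymnastics or use of the lifting criterion is needed.
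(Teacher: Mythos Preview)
Your proof is correct and follows essentially the same approach as the paper: reduce to the pointwise equality (a proposition since $F_2(x)$ is a set), strip the truncation coming from connectedness of $\sum_X F_1$, decompose the resulting path as $(p,w_p)$, and then use Lemma~\ref{lem:comm_with_transport} on each $h_i$ together with the basepoint hypothesis. The paper's version is slightly terser (it does not explicitly invoke function extensionality or the transported form of the hypothesis), but the argument is the same.
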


Since Lemma~\ref{lem:fiberwise_map_cov} serves as a `family of sets'-based version of the lifting criterion, its proof is also similar to that of Theorem~\ref{thm:lift-crit-2}.

\begin{proof}[Proof (Lemma \ref{lem:fiberwise_map_cov})]
    Assume that for all for all loops $p : x_0 =_X x_0$\,, existence of a dependent loop of type $( \transfib{F_1}{p} {u_1} =_{F_1(x_0)} u_1 )$ in $F_1$ implies existence of a dependent loop of type $( \transfib{F_2}{p} {u_2} =_{F_2(x_0)} u_2 )$ in $F_2$\,.
    Let $x : X$ and $u : F_1(x)$\,. We construct $h(x,u)$ in the same way as the lift in the proof of the lifting criterion (Theorem \ref{thm:lift-crit-2}), namely as the transport of the designated point $u_2 : F_2(x_0)$ along a path to the fiber $F_2(x)$\,. The path which $u_2$ is transported along is given in terms of a path in the total space of $F_1$\,. Since~$F_1$ is connected, we only have the mere existence of such paths, so we need to show that the construction of $h(x,u)$ is path-independent. Like in the lifting criterion, we again use extension by weak constancy (Lemma \ref{lem:ext_weak_constancy}) to replicate this construction method in HoTT.

    Take an arbitrary path of type $(x_0,u_1) = (x,u)$ in the total space of $F_1$\,. This is equivalent to an arbitrary path $q : x_0 =_X x$ such that $(\transfib{F_1}{q}{u_1} =_{F_1(x)} u)$\,. Since transporting~$u_1$ along $q$ yields $u$\,, it seems reasonable that transport of $u_2$ along $q$ should give $h(x,u)$\,. So, define $h(x,u)$ as:
    \[
        h(x,u) \defeq \transfib{F_2}{q}{u_2} : F_2(x)~.
    \]
    
    It remains to show that any two paths in $(x_0,u_1) = (x,u)$ produce the same point~$h(x,u)$\,. It suffices to show that any two paths $q_1, q_2 : x_0 =_X x$ satisfying $(\transfib{F_1}{q_i}{u_1} =_{F_1(x)} u)$ yield the same value for $h(x,u)$\,. Let $q_1, q_2$ be such paths. Like in the proof of the lifting criterion (Theorem \ref{thm:lift-crit-2}), $(q_1 \ct q_2^{-1})$ is a loop of type $x_0 =_X x_0$ and it holds that 
    \[
        \transfib{F_1}{q_1 \ct q_2^{-1}}{u_1} =_{F_1(x_0)} u_1 ~\text{,~~~~and so~~~~}\transfib{F_2}{q_1 \ct q_2^{-1}}{u_2} =_{F_2(x_0)} u_2~.
    \]
    It follows that $(\transfib{F_2}{q_1}{u_2} =_{F_2(x_0)} \transfib{F_2}{q_2}{u_2})$\,, so $h(x,u)$ is well-defined.

    To prove pointedness, we utilize that we have access to an explicit loop $\refl{x_0} : x_0 =_X x_0$ which satisfies $(\transfib{F_1}{\refl{x_0}}{u_1} \jdeq u_1)$\,. This allows us to compute $h(x_0,u_1)$\,:
    \[
        h(x_0,u_1) = \transfib{F_2}{\refl{x_0}}{u_2} \jdeq u_2~.
    \]

    Conversely, let $h : \prod_{x : X} F_1(x) \to F_2(x)$ be a basepoint-preserving, fiberwise map between covering spaces, let $p : x_0 =_X x_0$ be a loop and assume there exists a dependent loop of type $(\transfib{F_1}{p}{u_1} =_{F_1(x_0)} u_1)$\,. By Lemma \ref{lem:comm_with_transport}, the family $h(x,\blank)$ commutes with transport, which implies the existence of a dependent loop at $h(x_0,u_1)$ over $p$ in $F_2$\,, namely
    \[
        \transfib{F_2}{p}{h(x_0, u_1)} = h(x_0, \transfib{F_1}{p}{u_1}) = h(x_0, u_1)~.
    \]
    Since $h(x_0, u_1) = u_2$\,, we thus have a dependent loop of type $(\transfib{F_2}{p}{u_2} =_{F_2(x_0)} u_2)$\,, which is what we needed.
\end{proof}

\begin{proof}[Proof (Lemma \ref{lem:fiberwise_map_unique})]
    Let $h_1, h_2 : \prod_{x : X} F_1(x) \to F_2(x)$ be fiberwise maps and w.l.o.g. assume they coincide in $u_1$, so $h_1(x_0,u_1) = h_2(x_0,u_2)$\,. Let $x : X$ and $u : F_1(x)$ be arbitrary, the goal is to prove the proposition $h_1(x,u) =_{F_2(x)} h_2(x,u)$\,. $F_1$ is connected, so the mere path type $\brck{(x_0,u_1) = (x,u)}$ is inhabited. As the goal is a proposition, the truncation can be stripped, leaving an explicit path. This path is equivalent to a path $p : x_0 =_X x$ that satisfies $(\transfib{F_1}{p}{u_1} =_{F_1(x_0)} u)$\,. Lemma $\ref{lem:comm_with_transport}$ implies that the families $h_i(x,\blank)$ commute with transport, so
    \[
        h_i(x,u) = h_i(x,\transfib{F_1}{p}{u_1}) = \transfib{F_2}{p}{h_i(x_0,u_1)}
    \]
    for $i = 1,2$\,. By the assumption $h_1(x_0,u_1) = h_2(x_0,u_1)$\,, we thus have $h_1(x,u) = h_2(x,u)$ for arbitrary $x$ and $u$\,.
\end{proof}

Using Lemma \ref{lem:fiberwise_map_cov} and \ref{lem:fiberwise_map_unique} as replacements for the lifting criterion and the unique lifting property, we give a proof for the classification theorem in HoTT. Again, the proof itself closely follows the argument used by Hatcher \cite{hatcherAT}. 

\begin{proof}[Proof (Theorem \ref{thm:classification})]
    We need to show that the association $(F,u_0) \mapsto H_{(F,u_0)}$ is an equivalence, meaning that it is both injective and surjective.

    The proof of surjectivity uses the universal covering space $P : X \to \set$ constructed by Hou and Harper \cite{favonia-covspaces}\,; the fibers $P(x)$ are defined as the sets of paths from $x_0$ to $x$
    \[
        P(x) \defeq \trunc{0}{x_0 =_X x}~.
    \]
    Let $H : \pi_1(X,x_0) \to \prop$ be a subgroup. We follow Hatcher \cite{hatcherAT} for the construction of a covering space $F_H$\,. Let $\sim_x$ denote the relation on $\trunc{0}{x_0 =_X x}$ defined by
    \[
        q_1 \sim_x q_2 \defeq H(q_1 \ct q_2^{-1})~~\text{with}~q_1, q_2 : \trunc{0}{x_0 =_X x}~.
    \]
    Since $H$ is closed under group operations, $\sim_x$ is an equivalence relation. The covering space $F_H : X \to \set$ is then defined as the set-quotient of $P$ w.r.t $\sim$\,:
    \[
        F_H(x) \defeq P(x)/\sim_x ~~ \text{with designated point} ~u_H \defeq [\,\tproj{0}{\refl{x_0}}] : F_H(x_0)~.
    \]
    Connectedness of $F_H$ follows along similar lines as Lemma 3.11.8 in the HoTT book \cite{hottbook}. In order to reason about the terms $v : F_H(x)$ as being given by representatives $v \jdeq [p']$ with $p' : \brck{x_0 =_X x}$\,, we do need to perform induction on the quotient type. The (higher) coherence conditions required are satisfied because a type being connected is a proposition.
    
    We need to show that the subgroup $H_{(F_H, u_H)}$ is equal to $H$\,. This is equivalent to showing that for all loops $p : x_0 =_X x_0$ there is an equivalence
    \[
         H(\tproj{0}{p})~~~~ \eqvsym ~~~~ \bigl( \transfib{F_H}{p}{u_H} =_{F_H(x_0)} u_H \bigr) ~~~~(\,\jdeq~ H_{(F_H, u_H)}(\tproj 0 p)\,)~.
    \]
    (Being an equivalence is a proposition, so the truncation from loops in $\pi_1(X,x_0) \jdeq \trunc 0 {x_0 =_X x_0}$ can be stripped.) Let $p : x_0 =_X x_0$\,. By definition of $(F_H, u_H)$\,, the dependent loop type $(\transfib{F_H}{p}{u_H} =_{F_H(x_0)} u_H)$ can be rewritten as $[\,\tproj{0}{p}] =_{F_H(x_0)} [\,\tproj{0}{\refl{x_0}}]$ because $u_H \jdeq [\,\tproj{0}{\refl{x_0}}]$ and 
    \begin{align*}
        \transfib{F_H}{p}{u_H} 
            &\jdeq \transfib{F_H}{p}{[\,\tproj{0}{\refl{x_0}}]} = [\,\tproj{0}{\transfib{x \mapsto x_0 =_X x}{p}{\refl{x_0}}}] = [\,\tproj{0}{p}]~,
    \end{align*}
    where we use that the family of maps $[\,\tproj 0 \blank] : (x_0 =_X x) \to F_H(x)$ commutes with transport (Lemma \ref{lem:comm_with_transport}).
    Since $\sim_{x_0}$ is an equivalence relation, $[\,\tproj{0}{p}] =_{F_H(x_0)} [\,\tproj{0}{\refl{x_0}}]$ is equivalent to $\tproj{0}{p} \sim_{x_0} \tproj{0}{\refl{x_0}}$\,, which, by definition, is equivalent to $H(\tproj{0}{p} \ct \tproj{0}{\refl{x_0}}^{-1})$ and hence $H(\tproj 0 p)$\,. Thus, the subgroups $H_{(F_H, u_H)}$ and $H$ are equal.

    We now show that the association $(F,u_0) \mapsto H_{(F,u_0)}$ is injective. Assume that for two connected, pointed covering spaces $(F_1,u_1)$ and $(F_2,u_2)$\,, it holds that $H_{(F_1,u_1)} = H_{(F_2,u_2)}$\,. By definition of these subgroups, this implies that for all loops $p : x_0 =_X x_0$ we have
    \[
        \bigl( \transfib{F_1}{p} {u_1} =_{F_1(x_0)} u_1 \bigr) ~~ \eqvsym ~~ \bigl( \transfib{F_2}{p}{u_2} =_{F_2(x_0)} u_2 \bigr)~.
    \]
    (Again, truncations can be stripped from loops in $\pi_1(X,x_0)$ as the goal $(F_1,u_1) = (F_2,u_2)$ is a proposition.) By Lemma \ref{lem:fiberwise_map_cov} there exist fiberwise maps $h_{12} : \prod_{x : X} F_1(x) \to F_2(x)$ and $h_{21} : \prod_{x:X} F_2(x) \to F_1(x)$ such that $h_{12}(x_0,u_1) = u_2$ and $h_{21}(x_0,u_2) = u_1$\,. By Lemma~\ref{lem:fiberwise_map_unique}, it holds that $h_{21} \circ h_{12} = \idfunc[F_1]$ and $h_{12} \circ h_{21} = \idfunc[F_2]$\,, because these maps coincide in the designated points $(x_0,u_1)$ and $(x_0, u_2)$\,, namely 
    \[
        h_{21}(x_0, h_{12}(x_0, u_1)) = u_1 \jdeq \idfunc[F_1](x_0,u_1) ~~~~ \text{and} ~~~~ h_{12}(x_0, h_{21}(x_0, u_2)) = u_2 \jdeq \idfunc[F_2](x_0,u_2)~.
    \]
    Hence, $h_{12}$ and $h_{21}$ are each other's inverses, and so $h_{12}$ is a basepoint-preserving, fiberwise equivalence from $(F_1,u_1)$ to $(F_2,u_2)$. By Lemma \ref{lem:char_pcov} this means the pointed covering spaces are equal, thus proving the classification theorem. 
\end{proof}

\section{Canonical change of basepoint} \label{sec:change_of_basepoint}

To develop the necessary theory to prove Exercise 3.3.11 from Hatcher's \emph{Algebraic Topology}~\cite{hatcherAT} --- our initial goal --- we needed the existence of change-of-basepoint isomorphisms between homotopy groups of spheres, $\pi_n(S^n,x) \cong \pi_n(S^n,y)$\,. Such isomorphisms were needed to define the degree of a non-pointed map $S^n \to S^n$\,. The degree is easy to define for pointed maps, but for non-pointed maps you need a consistent way to associate $\pi_n(S^n, f(\base))$ with $\pi_n(S^n, \base)$\,. In this section we prove some classical results on the existence of change-of-basepoint isomorphisms for connected spaces in general.

In classical homotopy theory, any path $p$ from $a$ to $b$ in a topological space $X$ induces a change-of-basepoint isomorphism between homotopy groups $\pi_n(X,a) \cong \pi_n(X,b)$\,. The isomorphism depends on the homotopy class of the path $p$\,.
In the case that $X$ is simply-connected, the isomorphism can be considered canonical --- there is only one class of paths from $a$ to $b$\,.

In homotopy type theory, this change-of-basepoint isomorphism is given by transport along a path $p : a =_X b$, but usually we do not have access to such a path explicitly. If~$X$ is connected, all we have is that the mere path type $\brck{a =_X b}$ is inhabited. However, if transport along any specific path of type $a =_X b$ yields the same isomorphism, we can still obtain an explicit isomorphism from $\brck{a =_X b}$ via extension by weak constancy (Lemma \ref{lem:ext_weak_constancy}). The result we call a \textbf{canonical} change-of-basepoint isomorphism, since it is independent on the path $p : a =_X b$\,.

\begin{note*}
Because extension by weak constancy requires the constructed object's type to be a set, we cannot do away with the set-truncations in $\pi_n(X,x) \jdeq \trunc 0 {\Omega^n (X,x)}$ as we often could in Section \ref{sec:classification}.
\end{note*}

The condition that transport along every path in $a =_X b$ results in the same map $\pi_n(X,a) \to \pi_n(X,b)$ is equivalent to the condition that the fundamental group $\pi_1(X,a)$ acts trivially on the higher homotopy groups $\pi_n(X,a)$\,. The latter is a well-studied property.

\begin{definition}[$\pi_1$-action] \label{def:pi1-action}
    Let $(X,x_0)$ be a pointed type. The action of $\pi_1(X,x_0)$ on the higher homotopy groups $\pi_n(X,x_0)$ is defined on truncated loops $\tproj{0}{p} : \pi_1(X,x_0)$ by
    \[
        \tproj{0}{p} \,.\, u  \defeq \transfib{\pi_n(X,\blank)}{p^{-1}}{u}~,
    \]
    with $u : \pi_n(X,x_0)$\,. The inversion $p^{-1}$ is to obtain a \emph{left} action. The action is called \emph{trivial} if multiplication by any term $p' : \pi_1(X,x_0)$ leaves $u : \pi_n(X,x_0)$ unchanged, i.e. $p' \,.\, u = u$\,.
\end{definition}

\begin{lemma} \label{lem:eqv_trivial_pi1}
    Let $X$ be a type with points $a, b : X$. Then the following propositions hold:
    \begin{enumerate}[(i)]
        \item for all paths $q_1, q_2 : a =_X b$ we have $(\transfib{\pi_n(X,\blank)}{q_1}{\blank} = \transfib{\pi_n(X,\blank)}{q_2}{\blank})$\,
            if the $\pi_1(X,a)$-action on $\pi_n(X,a)$ is trivial.
            
        \item the $\pi_1(X,a)$-action on $\pi_n(X,a)$ is trivial if the mere path type $\brck{a =_X b}$ is inhabited and for all $q_1, q_2 : a =_X b$ it holds that $(\transfib{\pi_n(X,\blank)}{q_1}{\blank} = \transfib{\pi_n(X,\blank)}{q_2}{\blank})$\,.
    \end{enumerate}
\end{lemma}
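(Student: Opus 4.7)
The plan is to exploit two elementary properties of transport: its functoriality, \(\transfib{P}{p \ct q}{u} = \transfib{P}{q}{\transfib{P}{p}{u}}\), and its invertibility (transport along a path is an equivalence). Together these reduce both directions to short manipulations. Throughout, I will use function extensionality to pass between pointwise and function-level equalities, and I may strip the set-truncation in \(\pi_1(X,a) \jdeq \trunc{0}{a =_X a}\) or the propositional truncation in \(\brck{a =_X b}\) whenever the current goal is a proposition---which here is always the case, because every equality in play lives in the set \(\pi_n(X,a)\).

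For part~(i), I would fix \(q_1, q_2 : a =_X b\) and \(u : \pi_n(X,a)\), and set \(p \defeq q_2 \ct q_1^{-1} : a =_X a\). From \(q_1 = p^{-1} \ct q_2\), transport functoriality yields
\[
    \transfib{\pi_n(X,\blank)}{q_1}{u} = \transfib{\pi_n(X,\blank)}{q_2}{\transfib{\pi_n(X,\blank)}{p^{-1}}{u}}.
\]
Triviality of the \(\pi_1(X,a)\)-action, applied to \(\tproj{0}{p} : \pi_1(X,a)\), gives \(\transfib{\pi_n(X,\blank)}{p^{-1}}{u} \jdeq \tproj{0}{p} \,.\, u = u\), so both transports agree on \(u\); function extensionality lifts this to equality of functions.

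For part~(ii), I would assume \(\ast : \brck{a =_X b}\) together with the stated transport equality for all paths \(a =_X b\), and aim to show \(p' \,.\, u = u\) for all \(p' : \pi_1(X,a)\) and \(u : \pi_n(X,a)\). The goal is a proposition, so I may reduce to the case \(p' \jdeq \tproj{0}{p}\) for some \(p : a =_X a\) and strip \(\ast\) to an explicit path \(q : a =_X b\). Applying the hypothesis to the pair \(p^{-1} \ct q,\, q\) and using transport functoriality gives
\[
    \transfib{\pi_n(X,\blank)}{q}{\transfib{\pi_n(X,\blank)}{p^{-1}}{u}} = \transfib{\pi_n(X,\blank)}{p^{-1} \ct q}{u} = \transfib{\pi_n(X,\blank)}{q}{u},
\]
and cancelling the equivalence \(\transfib{\pi_n(X,\blank)}{q}{\blank}\) yields \(\tproj{0}{p} \,.\, u \jdeq \transfib{\pi_n(X,\blank)}{p^{-1}}{u} = u\).

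There is no substantive obstacle at the level of \(\pi_n\): the whole content is the cancellability of transport along a shared path \(q : a =_X b\), together with the trick of writing \(q_1\) as \(p^{-1} \ct q_2\). The only care required is the truncation bookkeeping in part~(ii), which goes through precisely because the goal is an equality in the set \(\pi_n(X,a)\) and thus a proposition, letting me perform the two truncation-strippings simultaneously.
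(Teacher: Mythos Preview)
Your proof is correct and follows essentially the same approach as the paper: in part~(i) you form the loop $q_2 \ct q_1^{-1}$ (the paper uses $q_1 \ct q_2^{-1}$) and use triviality plus transport functoriality, and in part~(ii) you apply the hypothesis to the pair $p^{-1}\ct q,\,q$ and cancel the common transport, exactly as the paper does with its path $p_{ab}$. The only differences are cosmetic---which side the inverse sits on and whether one phrases the last step as ``apply $\transfib{}{q}{\blank}$ to both sides'' versus ``cancel the equivalence''.
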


\begin{corollary}
    Let $X$ be a type with points $a, b : X$\,. There exists a change-of-basepoint isomorphism $\varphi : \pi_n(X,a) \cong \pi_n(X,b)$ which is \emph{canonical}, in the sense that for all $p : a =_X b$ it holds that $\varphi = \transfib{\pi_n(X,\blank)}{p}{\blank}$\,, if and only if the $\pi_1(X,a)$-action on $\pi_n(X,a)$ is trivial.
\end{corollary}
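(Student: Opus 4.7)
The plan is to derive the corollary directly from Lemma \ref{lem:eqv_trivial_pi1} combined with extension by weak constancy (Lemma \ref{lem:ext_weak_constancy}). Throughout I take for granted that $\brck{a =_X b}$ is inhabited, since otherwise the statement is vacuous: without even a mere path there is no candidate change-of-basepoint map to speak of.

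For the direction ``trivial action $\Rightarrow$ canonical $\varphi$ exists,'' I would assume that the $\pi_1(X,a)$-action on $\pi_n(X,a)$ is trivial. Part (i) of Lemma \ref{lem:eqv_trivial_pi1} then says that the map
\[
    \Phi : (a =_X b) \longrightarrow \bigl( \pi_n(X,a) \to \pi_n(X,b) \bigr)\,, \quad p \longmapsto \transfib{\pi_n(X,\blank)}{p}{\blank}\,,
\]
is weakly constant. Since $\pi_n(X,a)$ and $\pi_n(X,b)$ are sets by construction, the function type forming the codomain of $\Phi$ is itself a set (by function extensionality), so extension by weak constancy produces $\overline\Phi : \brck{a =_X b} \to (\pi_n(X,a) \to \pi_n(X,b))$. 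Evaluating $\overline\Phi$ at the inhabitant of $\brck{a =_X b}$ yields the desired $\varphi$, and the defining property of the extension gives $\varphi = \Phi(p) = \transfib{\pi_n(X,\blank)}{p}{\blank}$ for any explicit representative $p : a =_X b$, so $\varphi$ is canonical in the required sense. Because transport along any path is an equivalence that respects the group structure on the $0$-truncated loop space, $\varphi$ inherits the isomorphism structure by stripping the truncation to pick such a representative.

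For the converse, suppose $\varphi$ is a canonical change-of-basepoint isomorphism. Then for any two paths $q_1, q_2 : a =_X b$, canonicity gives
\[
    \transfib{\pi_n(X,\blank)}{q_1}{\blank} = \varphi = \transfib{\pi_n(X,\blank)}{q_2}{\blank}\,,
\]
so the transports along any two paths agree. Combined with the inhabitation of $\brck{a =_X b}$, part (ii) of Lemma \ref{lem:eqv_trivial_pi1} immediately delivers triviality of the $\pi_1(X,a)$-action on $\pi_n(X,a)$.

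The main technical hurdle is ensuring that extension by weak constancy applies, i.e., that the codomain of $\Phi$ is a set; this reduces to function types between sets being sets, which in turn rests on $\pi_n$ being defined as a set-truncation. Once this is in place, both implications follow by feeding Lemma \ref{lem:eqv_trivial_pi1} into or out of the weak-extension machinery, and no further calculation is needed.
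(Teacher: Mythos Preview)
Your proposal is correct and matches the paper's intended argument. The paper does not give an explicit proof of the corollary; it is meant to follow immediately from Lemma~\ref{lem:eqv_trivial_pi1} together with the preceding discussion on extension by weak constancy, and your write-up is precisely that fleshing-out---including the observation that the codomain of $\Phi$ is a set because $\pi_n$ is a set-truncation, and the tacit assumption that $\brck{a =_X b}$ is inhabited (the paper's surrounding text assumes $X$ connected).
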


\begin{proof}[Proof (Lemma \ref{lem:eqv_trivial_pi1})]
    We first show part (i). Assume that the $\pi_1(X,a)$ action on $\pi_n(X,a)$ is trivial.
    Since the goal is to show a proposition, this assumption can be stated as
    \[
        \transfib {\pi_n(X,\blank)}{p^{-1}}{u} = u~,
    \]
    for any loop $p : a =_X a$ and $u : \pi_n(X,a)$\,. Let $q_1, q_2 : a =_X b$\,, then $(q_1 \ct q_2^{-1})$ is a loop of type $a =_X a$\,, so by assumption it holds that
    \[
        \transfib{\pi_n(X,\blank)}{q_2^{-1}}{\transfib{\pi_n(X,\blank)}{q_1}{u}} = \transfib{\pi_n(X,\blank)}{q_1 \ct q_2^{-1}}{u} = u~,
    \]
    for all $u : \pi_n(X,a)$\,. The result follows by applying $\transfib{\pi_n(X,\blank)}{q_2}{\blank}$ to both sides.

    Now for part (ii). Assume the mere path type $\brck{a =_X b}$ is inhabited and that for all paths $q_1, q_2 : a =_X b$ it holds that $(\transfib{\pi_n(X,\blank)}{q_1}{\blank} = \transfib{\pi_n(X,\blank)}{q_2}{\blank})$\,.
    Now let~$p' : \pi_1(X,a)$ and $u : \pi_n(X,a)$\,. Since the goal is to show a proposition, we may assume that $p' \jdeq \tproj 0 p$ for some loop $p : a =_X a$\,. We also strip the truncation from $\brck{a =_X b}$ to obtain an explicit path $p_{ab} : a =_X b$\,. By assumption, transport along both paths $(p^{-1} \ct p_{ab}) : a =_X b$ and $p_{ab} : a =_X b$ yields the same function, so
    \begin{align*}
        \transfib{\pi_n(X,\blank)}{&p_{ab}}{\transfib{\pi_n(X,\blank)}{p^{-1}}{u}} = \transfib{\pi_n(X,\blank)}{p^{-1} \ct p_{ab}}{u} \\
        &= \transfib{\pi_n(X,\blank)}{p_{ab}}{u}~,
    \end{align*}
    for all $u : \pi_n(X,a)$\,. Applying $\transfib{\pi_n(X,\blank)}{p_{ab}^{-1}}{\blank}$ to both sides yield the desired result that $(\transfib{\pi_n(X,\blank)}{p^{-1}}{u} = u)$\,.
\end{proof}

The following theorem collects some results on triviality of the $\pi_1$-action in homotopy type theory. Results (i) and (ii) cover the cases where $(X,x_0) \defeq (S^n,\base)$\,, giving us the canonical change-of-basepoint isomorphisms $\pi_n(S^n,x) \cong \pi_n(S^n,y)$ we needed to define the degree for non-pointed maps. In working on these results, we also managed to prove third, more complicated result, for which we were unable to find a reference in the classical theory.

\begin{theorem}\label{lem:trivial-action}
    Let $(X,x_0)$ be a pointed type.
\begin{enumerate}[(i)]
    \item If $X$ is simply-connected, then the action of $\pi_1(X,x_0)$ on $\pi_n(X,x_0)$ is trivial for all~$n \geq 1$; 
    \item The fundamental group $\pi_1(X,x_0)$ is abelian if and only if the action of $\pi_1(X,x_0)$ on itself is trivial;
    \item If merely for all loops $p, q:\Omega(X,x_0)\,$ it holds that $p \ct q = q \ct p$\,, then the action of~$\pi_1(X,x_0)$ on $\pi_n(X,x_0)$ is trivial for all $n \geq 1$\,.
\end{enumerate}
\end{theorem}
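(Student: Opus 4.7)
The plan is to handle the three parts in turn, with (i) and (ii) being direct and (iii) proceeding by induction on $n$.

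For part (i), simply-connectedness of $X$ makes $\pi_1(X, x_0)$ contractible, so for any $p : \Omega(X, x_0)$ we have $\tproj 0 p = \tproj 0 {\refl{x_0}}$. By Theorem~7.3.12 of \cite{hottbook} this is equivalent to $\brck{p = \refl{x_0}}$. Since the goal $\tproj 0 p \cdot u = u$ is an equality in the set $\pi_n(X, x_0)$, and hence a proposition, I may strip this truncation and reduce to $p = \refl{x_0}$, in which case transport along $p^{-1} = \refl{x_0}$ is judgmentally the identity. For part (ii), using the conjugation formula~(\ref{eq:transp_conj}), the action on $u = \tproj 0 q \in \pi_1(X, x_0)$ unfolds to $\tproj 0 p \cdot u = \tproj 0 {p \ct q \ct p^{-1}}$. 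Triviality then amounts to $\tproj 0 {p \ct q \ct p^{-1}} = \tproj 0 q$ for all $p, q$; right-multiplying by $\tproj 0 p$ shows this equivalent to the commutativity $\tproj 0 {p \ct q} = \tproj 0 {q \ct p}$, giving both directions.

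For part (iii), I induct on $n$. Since the goal is a proposition, I first strip the mere hypothesis to obtain the honest equality $p \ct q = q \ct p$ for all $p, q : \Omega(X, x_0)$. The base case $n = 1$ then runs as in (ii): $p \ct q \ct p^{-1} = q \ct p \ct p^{-1} = q$, so $\tproj 0 p \cdot \tproj 0 q = \tproj 0 q$. For the inductive step, the key observation is that commutativity makes $c_p : q \mapsto p^{-1} \ct q \ct p$ pointwise equal to $\idfunc$ on $\Omega(X, x_0)$; by function extensionality, $c_p = \idfunc$ as functions. Writing the transport in $\Omega^{n+1}(X, \blank) = \Omega(\Omega^n(X, \blank), \rrefl \blank n)$ along $p$ via the standard transport-in-identity-types formula, I express it as an inner conjugation built from the transport $T_n \defeq \transfib{\Omega^n(X, \blank)}{p}{\blank}$ at level $n$ together with the coherence path $s_n \defeq \apdfunc{x \mapsto \rrefl x n}(p)$. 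I then strengthen the inductive hypothesis to ``$T_n = \idfunc$ as a function'': this propagates from level $n$ to $n+1$ by functoriality (applying $c_p = \idfunc$ through successive applications of $\mathsf{ap}$), so $T_{n+1}$ is again the identity function, and in particular acts trivially after set-truncation.

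The principal obstacle is the inductive step for (iii): carefully managing the coherence paths $s_n$ and the iterated functorial images of $c_p$, and verifying that the strengthened induction hypothesis survives each step. The hypothesis of (iii) is stronger than mere abelianness of $\pi_1$ precisely so that $c_p = \idfunc$ can be lifted as an honest equality of functions (not merely as an equality of $\pi_1$-elements) and then propagated through all loop-space levels.
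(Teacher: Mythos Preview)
Your treatments of (i) and (ii) are correct and essentially coincide with the paper's: simple-connectedness lets you strip a truncation and reduce to $p=\refl{x_0}$, and for (ii) the conjugation formula~(\ref{eq:transp_conj}) together with Lemma~\ref{lem:comm_with_transport} turns triviality of the action into commutativity of $\pi_1$ and back.

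For (iii) your overall architecture also matches the paper's: strip the mere hypothesis (the goal is a proposition), induct on $n$ with the strengthened hypothesis that $T_n \defeq \transfib{\Omega^n(X,\blank)}{p^{-1}}{\blank}$ equals $\idfunc$ as a function, and at the inductive step use the transport-in-identity-types formula to write $T_{n+1}(v)$ in terms of $\apfunc{T_n}(v)$ and the coherence $s_n=\apdfunc{x\mapsto\rrefl{x}{n}}(p^{-1})$. The paper does exactly this (its equation~(\ref{eq:transport_lower_loop_space})), and then uses the witness $h_p:T_n=\idfunc$ to rewrite $\apfunc{T_n}(v)$ as a conjugate of $v$ (its equation~(\ref{eq:middle_eq_id})), arriving at
\[
T_{n+1}(v)=\widetilde{p}^{\,-1}\ct v\ct \widetilde{p}
\quad\text{for a specific }(n{+}1)\text{-loop }\widetilde{p}~.
\]

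The gap is in how you close the inductive step. Your sentence ``this propagates from level $n$ to $n+1$ by functoriality (applying $c_p=\idfunc$ through successive applications of $\mathsf{ap}$)'' does not explain why the residual conjugation by $\widetilde{p}$ is trivial. Knowing $T_n=\idfunc$ gives $\apfunc{T_n}=\apfunc{\idfunc}$ only up to the transport along $h_p$, which is precisely what produces the conjugating loop; and $\widetilde{p}$ compares two \emph{a priori} unrelated witnesses that $T_n(\rrefl{x_0}{n})=\rrefl{x_0}{n}$ (namely $s_n$ and the evaluation of $h_p$), so there is no reason for $\widetilde{p}$ to be reflexivity. The paper closes this by invoking Eckmann--Hilton: since $n+1\geq 2$, composition in $\Omega^{n+1}(X,x_0)$ is commutative, hence $\widetilde{p}^{\,-1}\ct v\ct\widetilde{p}=v$ for every $v$. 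You need that (or an equivalent argument) here; ``successive $\mathsf{ap}$'s of $c_p=\idfunc$'' does not supply it.
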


Results (i) and (ii) follow quickly in both the synthetic and classical setting.

\begin{proof}[Proof (Theorem \ref{lem:trivial-action}, result (i))]
    Let $X$ be a simply-connected space. We need to show that $(p'\,.\, u =_{\pi_n(X,x_0)} u)$ for all $p': \pi_1(X,x_0)$ and $u : \pi_n(X,x_0)$\,; this is a proposition, so it suffices to show this claim for truncated loops $p' \jdeq \tproj 0 p$ only. Let $p : x_0 =_X x_0$\,, then since~$X$ is simply-connected, there exists a mere homotopy $\brck{p^{-1} = \refl{x_0}}$ between $p^{-1}$ and the constant path $\refl{x_0}$\,. Again, we can strip the truncation to obtain an explicit homotopy~$h : p^{-1} = \refl{x_0}$\,, which implies that
    \[
        \tproj 0 p \,.\, u \jdeq \transfib{\pi_n(X,\blank)}{p^{-1}}{u} = \transfib{\pi_n(X,\blank)}{\refl{x_0}}{u} \jdeq u~,
    \]
    for all $u : \pi_n(X,x_0)$\,.
\end{proof}

\begin{proof}[Proof (Theorem \ref{lem:trivial-action}, result (ii))]
    First, note that since truncation $\tprojf 0 : (x = x) \to \pi_1(X,x)$ commutes with transport (Lemma \ref{lem:comm_with_transport}), and transport in the loop space $(x_0 = x_0)$ equals conjugation (Lemma 2.11.2 in \cite{hottbook}), we have that for all loops $p, q : x_0=_X x_0$\,,
    \[
        \transfib{\pi_1(X,\blank)}{p^{-1}}{\tproj{0}{q}} = \tproj{0}{\transfib{x \mapsto x =_X x}{p^{-1}}{q}} = \tproj{0}{p \ct q \ct p^{-1}} \jdeq \tproj{0}{p} \cdot \tproj{0}{q} \cdot \tproj{0}{p}^{-1}~,
    \]
    where the final equality is because of the definition of the group operations on $\pi_1(X,x)$\,.
    
    Now, assume that $\pi_1(X,x_0)$ is abelian and let $p', q' :\pi_1(X,x_0)$\,. Since the goal is to show a proposition, we may assume that $p'\jdeq \tproj{0}{p}$ and $q' \jdeq \tproj{0}{q}$ for some loops $p, q : x_0 =_X x_0$\,, so by commutativity of $\pi_1(X,x_0)$ we have that
    \[
        \transfib{\pi_1(X,\blank)}{p^{-1}}{\tproj{0}{q}} = \tproj{0}{p} \cdot \tproj{0}{q} \cdot \tproj{0}{p}^{-1} = \tproj{0}{q}~.
    \]

    Conversely, assume that the $\pi_1(X,x_0)$-action on itself is trivial and let $p', q' :\pi_1(X,x_0)$\,. Since the goal is to show a proposition, we may again assume that $p'\jdeq \tproj{0}{p}$ and $q' \jdeq \tproj{0}{q}$ for some loops $p, q : x_0 =_X x_0$\,. Then, using the same equation as before, it holds that
    \[
        \tproj{0}{p} \cdot \tproj{0}{q} \cdot \tproj{0}{p}^{-1} = \transfib{\pi_1(X,\blank)}{p^{-1}}{\tproj{0}{q}} = \tproj{0}{q}~,
    \]
    so $p' \cdot q' \jdeq \tproj{0}{p} \cdot \tproj{0}{q} = \tproj{0}{q} \cdot \tproj{0}{p} \jdeq q' \cdot p'$\,.
\end{proof}

Before proving result (iii) of Theorem \ref{lem:trivial-action}, let us discuss its assumption, namely that
\begin{equation}
\text{\textit{``merely for all loops $p, q:\Omega(X,x_0)\,$ it holds that $p \ct q = q \ct p$''}.}\label{eq:hyp_res_3}
\end{equation}
On the surface, this seems to just say that the fundamental group $\pi_1(X,x_0)$ is abelian, but statement (\ref{eq:hyp_res_3}) is stronger. Consider, for example, the space $S^1 \vee S^2$\,. Its fundamental group is abelian, but the action of $\pi_1$ on $\pi_2$ is not trivial. If (\ref{eq:hyp_res_3}) could be weakened to just demanding that the fundamental group $\pi_1(X,x_0)$ is abelian, then $S^1 \vee S^2$ would serve as a counterexample to result (iii).

The subtle difference between these statements is caused by where the propositional truncation, indicated by the word \emph{merely}, is placed. In HoTT, there exists a map 
\[
     \Brck{\prd{p,q : \Omega(X,x_0)} p \ct q = q \ct p\,} ~~\longrightarrow~ \prd{p,q : \Omega(X,x_0)} \Brck{\,p \ct q = q \ct p\,}~,
\]
but, in general, this is not an equivalence. The type on the left-hand side expresses statement~(\ref{eq:hyp_res_3}), the type on the right-hand side is equivalent, via Theorem 7.3.12 in \cite{hottbook}, to the statement that the fundamental group $\pi_1(X,x_0)$ is abelian. According to Remark~3.8.4 in the HoTT book \cite{hottbook}, it is admissible to assume that an inverse exists --- which is equivalent to assuming the axiom of choice --- if the loop space $\Omega(X,x_0)$ is a set. In general this is not the case, as with the space $S^1 \vee S^2$\,.

What are the kind of spaces that satisfy the hypothesis in result (iii)? Classically, loop spaces in which composition is commutative are called \emph{homotopy commutative}. The commutativity is allowed to only hold up to homotopy, which is the default setting for equality between paths in homotopy type theory. By the Eckmann-Hilton argument, every H-space has a commutative loop space, but not every commutative loop space arises in this way, e.g. James Stasheff \cite[Thm.\,1.18]{stasheff_1961} proved that $\mathbb{CP}^3$ has a commutative loop space. Hideyuki Kachi \cite{kachi} also presents a plethora of finite CW complexes with commutative loop spaces, their focus is on simply-connected spaces --- for which triviality of the $\pi_1$-action is already covered by result (i) --- but they also briefly discuss non simply-connected CW~complexes.

Proving result (iii) takes more work than results (i) and (ii).

\begin{proof}[Proof (Theorem \ref{lem:trivial-action}, result (iii))]
    Assume that merely for all loops $p,q  : \Omega(X,x_0)$ it holds that $p \ct q = q \ct p$\,. We claim that it suffices to merely show that transport in $\Omega^n$ is trivial, i.e. that the type 
    \begin{equation}
        \Brck{~\prod_{p : \Omega(X,x_0)} \prod_{u : \Omega^n(X,x_0)} \transfib{\Omega^n(X,\blank)}{p^{-1}}{u} =_{\Omega^n(X,x_0)} u~} \label{eq:r3_new_goal}
    \end{equation}
    is inhabited. This type implies that for all loops $p : x_0 =_X x_0$ and $u : \Omega^n(X,x_0)$ the mere path type $\brck{\transfib{\Omega^n(X,\blank)}{p^{-1}}{u} =_{\Omega^n(X,x_0)} u}$ is inhabited, which by Theorem 7.3.12 in \cite{hottbook}, is equivalent to an equality between truncated loops, $\mathopen{}| \transfib{\Omega^n(X,\blank)}{p^{-1}} u |_{0} \mathclose{} =_{\pi_n(X,x_0)} \tproj 0 u$\,. 
    As $\tprojf 0 : \Omega^n(X,x) \to \pi_n(X,x)$ commutes with transport by Lemma~\ref{lem:comm_with_transport}, this implies that
    \[
        \tproj 0 p \,.\, \tproj 0 u \jdeq \transfib{\pi_n(X,
        \blank)}{p^{-1}}{\tproj 0 u} = \mathopen{}| \transfib{\Omega^n(X,\blank)}{p^{-1}} u |_{0} \mathclose{} = \tproj 0 u~,
    \]
    which says that the action of $\pi_1(X,x_0)$ on $\pi_n(X,x_0)$ is trivial. (Since triviality of the $\pi_1(X,x_0)$-action is a proposition, it suffices to only show the triviality for truncated loops of the form $\tproj 0 p : \pi_1(X,x_0)$ and $\tproj 0 u : \pi_n(X,x_0)$\,.)

    We proceed to show statement (\ref{eq:r3_new_goal}) by induction on $n$\,. The base case $n = 1$ follows directly from the assumption. Since (\ref{eq:r3_new_goal}) is a proposition, the `merely' part in the initial assumption can be stripped, leaving us with the fact that composition in the loop space is commutative. Together with Lemma 2.11.2 in \cite{hottbook}, which says that transport in $\Omega(X,x) \jdeq (x =_X x)$ equals conjugation, it holds that 
    \[
        \transfib{\Omega(X,\blank)}{p^{-1}}{q} = p \ct q \ct p^{-1} = q~~~~~~~\text{for all loops }p,q : (x_0 =_X x_0)~.
    \]

    For the inductive step, assume that (\ref{eq:r3_new_goal}) holds for $n$, we need to show it holds for $n+1$\,. Since both the induction hypothesis and the goal are propositionally truncated, we can strip both truncations. The resulting induction hypothesis says that transport in $\Omega^n$ is trivial, i.e.
    \[
        \transfib{\Omega^n(X,\blank)}{p^{-1}}{u} =_{\Omega^n(X,x_0)} u~~~~~~~\text{for all }p : x_0 =_X x_0 \text{ and }u : \Omega^n(X,x_0)~.
    \]
    By functional extensionality, this gives an equality between maps $\Omega^n(X,x_0) \to \Omega^n(X,x_0)$\,,     \begin{align}
        h_p : \transfib{\Omega^n(X,\blank)}{p^{-1}}{\blank} = \idfunc[\Omega^n(X,x_0)]~. \label{eq:h}
    \end{align}
    Now, let $p : x_0 =_X x_0$ and $v : \Omega^{n+1}(X,x_0)$\,. The goal is to show that transport in $\Omega^{n+1}$ is trivial, i.e.
    \[
        \transfib{\Omega^{n+1}(X,\blank)}{p^{-1}}{v} =_{\Omega^{n+1}(X,x_0)} v~.
    \]

    The next part involves a lot of complicated equations. We use a theorem from the HoTT book \cite{hottbook} to relate transport in $\Omega^{n+1}$ to transport in $\Omega^n$\,; from there we use the induction hypothesis, namely the path $h_p$\,, to relate transport in $\Omega^n$ to the identity on $\Omega^n$\,; using the previous two steps, we obtain an expression of transport in $\Omega^{n+1}$ as a conjugation of the original $(n+1)$-dimensional loop $v$ in $\Omega^{n+1}$\,; using that composition in $\Omega^{n+1}$ commutes, we get that transport in $\Omega^{n+1}$ is trivial.
    
    By Theorem 2.11.4 \cite{hottbook}, transport in $\Omega^{n+1}$ can be related to transport in $\Omega^n$\,. Recall that $v : \Omega^{n+1}(X,x_0) \jdeq (\rrefl {x_0} n = \rrefl{x_0}n)$\,, so
    \begin{align}
        &\transfib{\Omega^{n+1}(X,\blank)}{p^{-1}}{v} \nonumber \\
        &~~~~~~~~= (\apdfunc{\rrefl{(\blank)}{n}}(p^{-1}))^{-1} \ct (\apfunc{\transfib{\Omega^n(X,\blank)}{p^{-1}}{\blank}}(v)) \ct (\apdfunc{\rrefl{(\blank)}{n}}(p^{-1}))~, \label{eq:transport_lower_loop_space}
    \end{align}
    where $\apdfunc{\rrefl{(\blank)}{n}}(p^{-1}) : \transfib{\Omega^n(X,\blank)}{p^{-1}}{\rrefl{x_0}n} =_{\Omega^n(X,x_0)} \rrefl{x_0}n$ is a dependent equality between $\rrefl{x_0}n$ and itself.

    Using the path $h_p$ from the induction hypothesis (\ref{eq:h}), the middle term in (\ref{eq:transport_lower_loop_space}) can be rewritten as
    \begin{align}
        &\apfunc{\transfib{\Omega^n(X,\blank)}{p^{-1}}{\blank}}(v)
            = \transfib{\varphi \mapsto \varphi(\rrefl{x_0}{n}) = \varphi(\rrefl{x_0}{n})}{h_p^{-1}}{\apfunc{\idfunc[\Omega^n]}(v)} \nonumber \\
        &~~~~~~~~= (\apfunc{\varphi \mapsto \varphi(\rrefl {x_0} {n})}{(h_p^{-1})})^{-1} \ct \apfunc{\idfunc[\Omega^n]}(v) \ct (\apfunc{\varphi \mapsto \varphi(\rrefl {x_0} {n})}{(h_p^{-1})}) \nonumber \\
        &~~~~~~~~\jdeq (\apfunc{\varphi \mapsto \varphi(\rrefl {x_0} {n})}{(h_p^{-1})})^{-1} \ct v \ct (\apfunc{\varphi \mapsto \varphi(\rrefl {x_0} {n})}{(h_p^{-1})})~, \label{eq:middle_eq_id}
    \end{align}
    where the second equality follows from Theorem 2.11.3 in \cite{hottbook}. The term $\apfunc{\varphi \mapsto \varphi(\rrefl {x_0} {n})}{(h_p^{-1})}$ is of type $\rrefl{x_0}n \jdeq \idfunc[\Omega^n](\rrefl{x_0}n) = \transfib{\Omega^n(X,\blank)}{p^{-1}}{\rrefl{x_0}{n}}$\,, the reverse equality of the term~$\apdfunc{\rrefl{(\blank)}{n}}(p^{-1})$ in (\ref{eq:transport_lower_loop_space}).

    Combining equations (\ref{eq:transport_lower_loop_space}) and (\ref{eq:middle_eq_id}), we have that $\transfib{\Omega^{n+1}(X,\blank)}{p^{-1}}{v}$ equals
    \begin{align*}
        (\apdfunc{\rrefl{(\blank)}{n}}(p^{-1}))^{-1} \ct (\apfunc{\varphi \mapsto \varphi(\rrefl {x_0} {n})}{(h_p^{-1})})^{-1} \ct v \ct (\apfunc{\varphi \mapsto \varphi(\rrefl {x_0} {n})}{(h_p^{-1})}) \ct (\apdfunc{\rrefl{(\blank)}{n}}(p^{-1}))~. 
    \end{align*}
    Note that $(\apfunc{\varphi \mapsto \varphi(\rrefl {x_0} {n})}{(h_p^{-1})}) \ct (\apdfunc{\rrefl{(\blank)}{n}}(p^{-1}))$ is a $(n+1)$-dimensional loop of the type $\rrefl{x_0}n = \rrefl{x_0}{n}$\,, just like $v$\,. Transportation in $\Omega^{n+1}$ thus comes down to conjugation with some $(n+1)$-cell, just like in $\Omega^1$\,. Since $n+1 \geq 2$\,, composition of $(n+1)$-cells in $\Omega^{n+1}$ is commutative, which gives us that
    \[
        \transfib{\Omega^{n+1}(X,\blank)}{p^{-1}}v = \widetilde{p}^{-1} \ct v \ct \widetilde{p} = v~,
    \]
    with $\widetilde{p} \defeq (\apfunc{\varphi \mapsto \varphi(\rrefl {x_0} {n})}{(h_p^{-1})}) \ct (\apdfunc{\rrefl{(\blank)}{n}}(p^{-1}))$\,, i.e. transport in $\Omega^{n+1}$ is trivial.
\end{proof}

\subsection{Relation to free pointedness}
Whereas we used change-of-basepoint isomorphisms $\pi_n(S^n,x) \cong \pi_n(S^n,y)$ to define the degree for non-pointed maps $S^n \to S^n$\,, Hou \cite{favonia-thesis} takes a different approach: they observe that the degree-map $(S^n \to S^n) \to \mathbb{Z}$ takes values in a set, so it suffices to define the map on the set-truncation $\trunc{0}{S^n \to S^n}$\,. They then use that, on the level of sets, pointedness is free, meaning that for $n \geq 1$ the projection that forgets about pointedness is an equivalence, i.e.
\begin{equation*}
    \trunc{0}{(S^n,\base) \pto (S^n, \base)} ~\eqvsym~ \trunc{0}{S^n \to S^n}~.
\end{equation*}
In \cite{favonia-cellcohom}, Buchholtz and Hou provide an intuition as to why this map is an equivalence: for any map $f : S^n \to S^n$\,, the suspension $\mathsf{susp}(f) : S^{n+1} \to S^{n+1}$ is automatically pointed and by the Freudenthal suspension theorem all maps $S^{n+1} \to S^{n+1}$ are of this form.

Hou's approach to defining the degree map and the approach in this article are more similar than they appear on first sight. In this section, we show a classical statement relating basepoint-preserving and free homotopy classes of functions. In HoTT, these are encoded as the sets of pointed maps $\trunc{0}{(Z,z_0) \pto (X, x_0)}$ and non-pointed maps $\trunc{0}{Z \to X}$\,. From this classical statement, it follows that the forgetful map
\[
    \trunc{0}{(Z,z_0) \pto (X, x_0)} ~\longrightarrow~ \trunc{0}{Z \to X}
\]
is an equivalence precisely when the $\pi_1(X,x_0)$-action on pointed maps $\trunc{0}{(Z,z_0) \pto (X, x_0)}$ is trivial. Moreover, for $(Z,z_0) \defeq (S^n,\base)$\,, the $\pi_1$-action on $\trunc{0}{(S^n,\base) \pto (X, x_0)}$ coincides with the $\pi_1$-action on higher homotopy groups $\pi_n$ under the equivalence
\[
    \trunc{0}{(S^n,\base)) \pto (X, x_0)}~\cong~\pi_n(X,x_0)
\]
from Lemma 6.5.4 in the HoTT book \cite{hottbook}. Thus, in the end both approaches rest on the triviality of the $\pi_1$-action on the higher homotopy groups of spheres.

Similar to the $\pi_1$-action on higher homotopy groups (Definition \ref{def:pi1-action}), the $\pi_1$-action on the set of pointed maps is defined using transport. Following Hatcher \cite{hatcherAT}, this action is defined as a right action, as opposed to the action on higher homotopy groups which is a left action. 

\begin{definition}[$\pi_1$-action on pointed maps]
    Let $(Z,z_0)$ and $(X,x_0)$ be pointed types. The action of $\pi_1(X,x_0)$ on the set of pointed maps $\trunc{0}{(Z,z_0) \pto (X,x_0)}$ is defined on truncated loops $\tproj 0 p : \pi_1(X,x_0)$ as
    \[
        f \,.\, \tproj 0 p \defeq \transfib{\trunc{0}{(Z,z_0) \pto (X,\blank)}}{p}{f}
    \]
    with $f : \trunc{0}{(Z,z_0) \pto (X,x_0)}$\,.
\end{definition}

Alternatively, the $\pi_1$-action on pointed maps can be expressed as follows.

\begin{lemma} \label{lem:alt_transp_pmaps}
    Let $(Z,z_0)$ and $(X,x_0)$ be pointed types. For all loops $p : x_0 =_X x_0$ and truncated, pointed maps $\tproj 0 {(f,w_f)} : \trunc{0}{(Z,z_0) \pto (X,x_0)}$\,, it holds that
    \[
        \tproj 0 {(f,w_f)}\,.\, \tproj 0 p = \tproj 0 {(f,w_f \ct p)}~, 
    \]
    i.e. the $\pi_1$-action only acts on the proof of pointedness $w_f : f(z_0) =_X x_0$\,.
\end{lemma}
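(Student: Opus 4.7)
The plan is to reduce the equation to two well-known computations of transport: one for the truncation $\trunc{0}{\blank}$ (which commutes with transport) and one for transport inside the sigma-type unfolding of $(Z,z_0)\pto(X,x)$.

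First, I would note that unfolded pointed maps have the form $\sm{f:Z\to X} f(z_0) =_X x$, so the type family $P(x) \defeq ((Z,z_0)\pto(X,x))$ is a $\Sigma$-type in which the first component $f : Z\to X$ does \emph{not} depend on $x$, while the second component lies in the family $Q(f,x) \defeq (f(z_0) =_X x)$. By the standard computation of transport in $\Sigma$-types (cf.\ HoTT book, Thm.\,2.7.4), for any $p : x_0 =_X x_0$,
\[
    \transfib{P}{p}{(f,w_f)} = (f,\ \transfib{Q(f,\blank)}{p}{w_f}).
\]
Next, I would apply HoTT book Lemma 2.11.2, which states that transport in the family $y \mapsto (a =_X y)$ along $p$ is post-composition with $p$. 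Instantiating with $a \defeq f(z_0)$ yields
\[
    \transfib{Q(f,\blank)}{p}{w_f} = w_f \ct p,
\]
so that $\transfib{P}{p}{(f,w_f)} = (f,\ w_f \ct p)$ on the level of non-truncated pointed maps.

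Finally, I would lift this to the set-truncation. By Lemma \ref{lem:comm_with_transport} applied to the fiberwise map $\tprojf{0} : P(x) \to \trunc{0}{P(x)}$, truncation commutes with transport:
\[
    \transfib{\trunc{0}{P(\blank)}}{p}{\tproj{0}{(f,w_f)}} = \tproj{0}{\transfib{P}{p}{(f,w_f)}} = \tproj{0}{(f,\ w_f\ct p)}.
\]
Since the left-hand side is by definition $\tproj{0}{(f,w_f)} \,.\, \tproj{0}{p}$, this gives the desired equality.

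I do not expect any real obstacle here: both ingredients (sigma-transport and transport-truncation commutation) are already available, and no extension by weak constancy is needed because the explicit loop $p$ is given, not a merely existing one. The only subtlety is purely bookkeeping, namely making sure to unfold $(Z,z_0)\pto(X,x)$ as a $\Sigma$-type so that the $\Sigma$-transport formula applies cleanly and correctly identifies $f$ as independent of the base point.
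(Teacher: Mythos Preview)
Your proposal is correct and follows essentially the same route as the paper: unfold $(Z,z_0)\pto(X,\blank)$ as a $\Sigma$-type, compute transport there (the paper does this by path induction on a free path rather than invoking Thm.~2.7.4, but that is the same content), apply Lemma~2.11.2 for the identity-type component, and then use Lemma~\ref{lem:comm_with_transport} to pull the transport past $\tprojf{0}$.
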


\begin{proof}
    Let $\tproj 0 p : \pi_1(X,x_0)$ and $\tproj 0 {(f,w_f)} : \trunc{0}{(Z,z_0) \pto (X,x_0)}$\,. Transport in the type family $((Z,z_0) \pto (X,\blank)) \jdeq \sm{g : Z \to X} g(z_0) =_X (\blank)$  can be rewritten as
    \begin{equation}
        \transfib{(Z,z_0) \pto (X,\blank)} p {(f,w_f)} = (f, \transfib{f(z_0) = (\blank)} p {w_f})~. \label{eq:rewrite-pi1-action-1}
    \end{equation}
    This follows from a more general statement where $p$ is a free path (meaning that we also have universal quantification over its endpoints), which holds by path induction. Using equation~(\ref{eq:rewrite-pi1-action-1}) and that truncation $\tprojf 0$ commutes with transport by Lemma \ref{lem:comm_with_transport}, it holds that
\begin{align*}
    & \transfib{\trunc{0}{(Z,z_0) \pto (X,\blank)}}{p}{\tproj{0}{(f,w_f)}} = \mathopen{}|\transfib{(Z,z_0) \pto (X,\blank)}{p}{(f,w_f)}|_0\mathclose{} \\
    &~~~~~~= \mathopen{}|(f, \transfib{f(z_0) = (\blank)}{p}{w_f})|_0\mathclose{} = \tproj{0}{(f,w_f \ct p)}~,
\end{align*}
where the final equality is by Theorem 2.11.2 in \cite{hottbook}.
\end{proof}

The $\pi_1$-action on pointed maps coincides with the $\pi_1$-action on higher homotopy groups, modulo an inversion to account for the difference in left and right action.
\begin{lemma}
    Let $(X,x_0)$ be a pointed type, then there exists an equivalence
    \[
        \varphi : \trunc{0}{(S^n,\base) \pto (X,x_0)} ~\cong~ \pi_n(X,x_0)~,
    \]
    and \,$\varphi(f \,.\, p) = p^{-1} \,.\, \varphi(f)$ for loops $p : \pi_1(X,x_0)$ and maps $f : \trunc{0}{(S^n,\base) \pto (X,x_0)}$\,.
\end{lemma}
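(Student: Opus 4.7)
The plan is to take $\varphi$ to be the 0-truncation of the pointed-map/loop-space equivalence $\Phi_{x_0} : ((S^n,\base) \pto (X,x_0)) \simeq \Omega^n(X,x_0)$ provided by Lemma 6.5.4 of the HoTT book, and then to deduce compatibility with the two $\pi_1$-actions from the naturality of $\Phi$ in the codomain basepoint.

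I would begin by observing that Lemma 6.5.4 in fact produces a fiberwise equivalence of type families over $X$: its construction relies only on the universal property of $S^n$ as a higher inductive type and on manipulations of the pair $(f,w_f)$, so it goes through uniformly for every basepoint $x : X$. This yields a family $\Phi_x : ((S^n,\base) \pto (X,x)) \simeq \Omega^n(X,x)$; applying 0-truncation pointwise gives a fiberwise equivalence $\varphi_x : \trunc{0}{(S^n,\base) \pto (X,x)} \simeq \pi_n(X,x)$, and I would set $\varphi \defeq \varphi_{x_0}$.

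For the compatibility statement, the equality $(\varphi(f \,.\, p) = p^{-1} \,.\, \varphi(f))$ lives in the set $\pi_n(X,x_0)$ and is therefore a proposition, so I may assume $p$ to be an explicit loop $p : x_0 =_X x_0$ rather than a truncated one. Unfolding both actions and using $(p^{-1})^{-1} = p$, the goal reduces to
\[
    \varphi_{x_0}\bigl(\transfib{\trunc{0}{(S^n,\base) \pto (X,\blank)}}{p}{f}\bigr) = \transfib{\pi_n(X,\blank)}{p}{\varphi_{x_0}(f)}~,
\]
which is exactly the statement that the fiberwise family $\varphi_{(\blank)}$ commutes with transport along $p$. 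This is an immediate application of Lemma \ref{lem:comm_with_transport}.

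The main thing to be careful about is the organization of Lemma 6.5.4 as a genuine fiberwise equivalence, i.e.\ checking that $\Phi_x$ and its inverse depend on $x$ only through transparent projections of $(f,w_f)$. Once this is in hand, the remainder of the argument is bookkeeping: the right action on $\trunc{0}{(S^n,\base) \pto (X,x_0)}$ is transport along $p$, while the left action on $\pi_n(X,x_0)$ is transport along $p^{-1}$, so the reversal in the claimed identity $\varphi(f \,.\, p) = p^{-1} \,.\, \varphi(f)$ is precisely the discrepancy between these two conventions. As an alternative route that avoids inspecting the construction of $\Phi_x$, one could invoke Lemma \ref{lem:alt_transp_pmaps} to rewrite the right action as $\tproj{0}{(f',w_{f'})} \,.\, \tproj{0}{p} = \tproj{0}{(f', w_{f'} \ct p)}$ and then read off directly how $\varphi$ translates concatenation on the proof of pointedness into the corresponding transport in $\pi_n(X,\blank)$.
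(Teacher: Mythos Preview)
Your proposal is correct and follows essentially the same route as the paper: both arguments invoke the family of equivalences $\varphi_x$ coming from Lemma~6.5.4 of the HoTT book, strip the truncation from $p$ using that the goal is a proposition, and then reduce the compatibility identity to an instance of Lemma~\ref{lem:comm_with_transport}. The paper is slightly terser---it simply asserts the existence of the family $\varphi_x$ without commenting on why Lemma~6.5.4 is uniform in the basepoint---whereas you flag this as the one place requiring care; your alternative route via Lemma~\ref{lem:alt_transp_pmaps} is not used in the paper but is a reasonable fallback.
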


\begin{proof}
    The equivalence $\varphi$ comes from Lemma 6.5.4 in \cite{hottbook}\,. Moreover, we actually have a family of equivalences $\varphi_x : \trunc{0}{(S^n,\base) \pto (X,x)} ~\cong~ \pi_n(X,x)$\,. Let $p' : \pi_1(X,x_0)$ and $f : \trunc{0}{(S^n,\base) \pto (X,x_0)}$\,. The goal is to show a proposition, namely an equality in the set $\pi_n(X,x_0)$\,, so we can assume that $p' \jdeq \tproj 0 p$ for some $p : x_0 =_X x_0$\,. By Lemma \ref{lem:comm_with_transport}, the family of equivalences commutes with transport, which gives us that
    \[
         \varphi(f \,.\, \tproj 0 p) \jdeq \varphi(\transfib{\trunc{0}{(S^n,\base) \pto (X,\blank)}}{p}{f}) = \transfib{\pi_n(X,\blank)}{p}{\varphi(f)} = \tproj 0 {p}^{-1} \,.\, \varphi(f)~.
    \]
\end{proof}

We now show the classical statement relating basepoint-preserving homotopy classes $\trunc 0 {(Z,z_0) \pto (X,x_0)}$ and free homotopy classes $\trunc 0 {Z \to X}$ in homotopy type theory. Note that the classical formulation requires $Z$ to be restricted to the class of CW complexes, but this is not necessary in HoTT since $\infty$-groupoids already behave well enough.
\begin{lemma}[cf.~{\cite[Prop.\,4A.2]{hatcherAT}}]
    Let $(Z,z_0)$ and $(X,x_0)$ be pointed types with $X$ connected. The map that forgets pointedness induces an equivalence between the orbit set $\trunc{0}{(Z,z_0) \pto (X,x_0)} \,/\, \pi_1(X,x_0)$ and $\trunc{0}{Z \to X}$\,.
\end{lemma}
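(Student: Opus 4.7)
The plan is to define the forgetful map at the level of truncations as $\Phi : \trunc{0}{(Z,z_0) \pto (X,x_0)} \to \trunc{0}{Z \to X}$ by $\tproj{0}{(f, w_f)} \mapsto \tproj{0}{f}$, show that it is invariant under the $\pi_1(X,x_0)$-action so it descends to a map $\overline{\Phi}$ on the orbit quotient, and then prove that $\overline{\Phi}$ is both surjective and injective. Invariance under the action is immediate from Lemma~\ref{lem:alt_transp_pmaps}: we have $\tproj{0}{(f, w_f)} \,.\, \tproj{0}{p} = \tproj{0}{(f, w_f \ct p)}$, and both sides are mapped by $\Phi$ to $\tproj{0}{f}$. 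The induced map $\overline{\Phi}$ then exists by the universal property of the set-quotient.

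Next I would tackle surjectivity, which is the easier direction. Given a representative $f : Z \to X$ of an element $\tproj{0}{f} : \trunc{0}{Z \to X}$, since $X$ is connected the mere path type $\brck{f(z_0) =_X x_0}$ is inhabited. Because exhibiting a preimage is a proposition (mere existence), this truncation can be stripped to obtain an explicit $w : f(z_0) = x_0$; then $(f, w)$ is a pointed map whose class in $\trunc{0}{(Z,z_0) \pto (X,x_0)}$ maps to $\tproj{0}{f}$.

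The hard part will be injectivity. Given two orbit classes with representatives $\tproj{0}{(f, w_f)}$ and $\tproj{0}{(g, w_g)}$ satisfying $\tproj{0}{f} = \tproj{0}{g}$ in $\trunc{0}{Z \to X}$, the plan is to (i) invoke Theorem~7.3.12 of~\cite{hottbook} to rewrite this equality as $\brck{f = g}$, (ii) observe that showing equality of orbits is a proposition so this truncation can be stripped to an explicit $h : f = g$, (iii) transport $w_g$ back along $h$ to obtain $w'_g : f(z_0) = x_0$ such that $(f, w'_g)$ and $(g, w_g)$ are equal as pointed maps (via the standard characterization of paths in sigma types), and (iv) set $p \defeq w_f^{-1} \ct w'_g : x_0 =_X x_0$ and apply Lemma~\ref{lem:alt_transp_pmaps} to compute
\[
    \tproj{0}{(f, w_f)} \,.\, \tproj{0}{p} = \tproj{0}{(f, w_f \ct p)} = \tproj{0}{(f, w'_g)} = \tproj{0}{(g, w_g)},
\]
which places the two representatives in the same orbit. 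The main subtlety throughout is careful bookkeeping of truncations --- verifying at each step that the current goal is propositional so that stripping is permitted --- together with the transport computation identifying $(f, w'_g)$ with $(g, w_g)$ as pointed maps.
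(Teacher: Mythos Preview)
Your proposal is correct and follows essentially the same approach as the paper's proof: both define the forgetful map, verify invariance under the $\pi_1$-action via Lemma~\ref{lem:alt_transp_pmaps}, prove surjectivity using connectedness of $X$, and prove injectivity by stripping $\brck{f=g}$ (via Theorem~7.3.12 in~\cite{hottbook}) and exhibiting the loop that relates the two pointed representatives. Your $w'_g$ obtained by transporting $w_g$ back along $h$ is exactly the paper's $h(z_0)\ct w_{f_2}$, so your loop $p = w_f^{-1}\ct w'_g$ coincides with the paper's choice $w_{f_1}^{-1}\ct h(z_0)\ct w_{f_2}$.
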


The proof is kind of similar to the classical proof, but one needs to be able to read through the homotopical interpretation to see the correspondence.

\begin{proof}
    First, note that $\fst : ((Z,z_0) \pto (X,x_0)) \to (Z \to X)$ indeed induces a map
    \[
        \psi :\,~\trunc{0}{(Z,z_0) \pto (X,x_0)} \,/\, \pi_1(X,x_0)~~ \longrightarrow~~ \trunc{0}{Z \to X}~,
    \]
    which is given by $\psi ([\,\tproj 0 {(f,w_f)}]) \defeq \tproj 0 f$~.
    To show that $\psi$ is well-defined, we need to prove that for all maps $f_1', f_2' : \trunc{0}{(Z,z_0) \pto (X,x_0)}$ in the same orbit it holds that $\psi(f_1') = \psi(f_2')$\,. (The higher coherence conditions are automatically satisfied because $\trunc 0 {Z \to X}$ is a set.) If $f_1'$ and $f_2'$ are in the same orbit, this means there exists a truncated loop $p' : \pi_1(X,x_0)$ and a path $w_{p'} : (f_1' \,.\, p') = f_2'$\,. (Technically, we have only mere existence of $p'$ and $w_{p'}$\,, but this can be stripped since $\psi(f_1') = \psi(f_2')$ is a proposition). Since one endpoint of $w_p$ is free, namely $f_2'$\,, we can perform path induction on $w_{p'}$\,, meaning that we may assume $f_2' \jdeq (f_1' \,.\, p')$\,. The goal is then to show that $\psi(f_1') = \psi(f_1'\,.\,p')$\,. We may also assume that $p' \jdeq \tproj 0 p$ and $f_1' \jdeq \tproj 0 {(f_1, w_{f_1})}$ for some $p : x_0 =_X x_0$ and $(f_1, w_{f_1}) : (Z,z_0) \pto (X,x_0)$\,.
    By the alternative expression for the $\pi_1(X,x_0)$-action (Lemma \ref{lem:alt_transp_pmaps}), it holds that
    \[
        f_1' \,.\, p' \jdeq \tproj 0 {p} \,.\, \tproj 0 {(f_1, w_{f_1})} = \tproj 0 {(f_1, w_{f_1} \ct p)}~,
    \]
    from which it follows that
    \[
        \psi(f_1') \jdeq \psi(\tproj 0 {(f_1, w_{f_1})}) \jdeq \tproj 0 {f_1} \jdeq \psi (\tproj 0 {(f_1, w_{f_1} \ct p)}) = \psi(f_1' \,.\, p')~.
    \]

    Next, we show that $\psi$ is surjective. Let $f : Z \to X$\,, the goal is to \emph{merely} construct a term $f_\bullet : \trunc 0 {(Z,z_0) \pto (X,x_0)} \,/\, \pi_1(X,x_0)$ such that $\psi(f_\bullet) = \tproj 0 f$\,. (We may strip the truncation from $\trunc 0 {Z \to X}$ because the goal is a proposition). Since $X$ is connected, $\brck{f(z_0) =_X x_0}$ is inhabited. Note that the goal is to merely construct a term in the preimage of $f$\,, so we can strip this truncation, which yields an explicit path $p : f(z_0) =_X x_0$\,. We use this path to define $f_\bullet$\,, namely
    \[
        f_\bullet \defeq [\,\tproj 0 {(f,p)}] ~\,:~\,\trunc 0 {(Z,z_0) \pto (X,x_0)} \,/\, \pi_1(X,x_0)~,
    \]
    and we have that $\psi(f_\bullet) \jdeq \psi([\,\tproj 0 {(f,p)}]) \jdeq \tproj 0 f$\,.

    To show injectivity, consider two terms in $\trunc{0}{(Z,z_0) \pto (X,x_0)} \,/\, \pi_1(X,x_0)$ of the form $[\,\tproj{0}{(f_1,w_{f_1})}]$ and $[\,\tproj{0}{(f_2,w_{f_2})}]$ which satisfy $\psi([\,\tproj{0}{(f_1,w_{f_1})}]) = \psi([\,\tproj{0}{(f_2,w_{f_2})}])$\,, meaning $\tproj 0 {f_1} = \tproj 0 {f_2}$\,. The goal is to show that $[\,\tproj{0}{(f_1,w_{f_1})}] = [\,\tproj{0}{(f_2,w_{f_2})}]$\,.
    (We may assume the terms $[\,\tproj{0}{(f_i,w_{f_i})}]$ to be of this form because the goal is a proposition, which not only allows us to strip the truncation from $\trunc{0}{(Z,z_0) \pto (X,x_0)}$\,, but also to consider specific representatives of the equivalence classes.) By Theorem 7.3.12 in \cite{hottbook}, the assumption $\tproj 0 {f_1} = \tproj 0 {f_2}$ is equivalent to a mere equality $\brck{f_1 = f_2}$\,. Stripping the truncation gives an explicit path $h' : f_1 = f_2$\,, and thus also a homotopy $h : \prod_{z} f_1(z) = f_2(z)$\,.
    The goal is equivalent to showing that $\tproj{0}{(f_1,w_{f_1})}$ and $\tproj{0}{(f_2,w_{f_2})}$ belong to the same orbit, i.e. showing there (merely) exists a loop $p : \pi_1(X,x_0)$ such that $\tproj{0}{(f_1,w_{f_1})} \,.\, p =\tproj{0}{(f_2,w_{f_2})}$\,. Choose $p \defeq | w_{f_1}^{-1} \ct h(z_0) \ct w_{f_2} |_0$\,, then by Lemma \ref{lem:alt_transp_pmaps} it holds that
    \[
        \tproj{0}{(f_1,w_{f_1})} \,.\, |w_{f_1}^{-1} \ct h(z_0) \ct w_{f_2}|_0 = | (f_1, w_{f_1} \ct (w_{f_1}^{-1} \ct h(z_0) \ct w_{f_2})) |_0= \tproj{0}{(f_1,h(z_0) \ct w_{f_2})}~.
    \]
    It thus suffices to show that $(f_1, h(z_0)\ct w_{f_2}) = (f_2,w_{f_2})$\,. To turn the equality $h'~:~f_1~=~f_2$ into an equality of pointed maps $(f_1, h(z_0)\ct w_{f_2}) = (f_2,w_{f_2})$\,, we need to show that $(\transfib{f \mapsto f(z_0) = x_0}{h'}{h(z_0) \ct w_{f_2}} = w_{f_2})$\,, which holds since
    \[
        \transfib{f \mapsto f(z_0) = x_0}{h'}{h(z_0) \ct w_{f_2}} = h(z_0)^{-1} \ct (h(z_0) \ct w_{f_2}) = w_{f_2}~,
    \]
    the first equality follows from path induction on $h'$\,. This concludes the proof that $\psi$ is a well-defined equivalence.
\end{proof}

By the previous lemma, the forgetful map $\trunc{0}{(Z,z_0) \pto (X, x_0)} ~\longrightarrow~ \trunc{0}{Z \to X}$
is an equivalence precisely when the $\pi_1$-action on pointed maps is trivial. Hence, both the free pointedness of maps $\trunc{0}{(Z,z_0) \pto (X, x_0)}$\,, and the existence of canonical change-of-basepoint isomorphisms $\pi_n(X,x) \cong \pi_n(X,y)$\,, are consequences of trivial $\pi_1$-actions, the same $\pi_1$-action, in fact, when $(Z,z_0) \defeq (S^n,\base)$\,.

\section{Discussion and Conclusion}
In this article, we developed parts of algebraic topology in the synthetic language provided by homotopy type theory (HoTT). We proved a synthetic version of the classification of covering spaces, and synthetically explored the existence of canonical change-of-basepoint isomorphisms between homotopy groups. 

Developing this theory synthetically required translating the classical definitions and statements into HoTT and there is some freedom in picking what translation to use. Some translations are easier to work with than others.
For those interested in formalizing some topics from algebraic topology in HoTT themselves, we recommend the following:
\begin{enumerate}
    \item Use type families $P : X \to \ttype$ instead of the total space $\sum_X P$\,. Although this perspective takes some getting used to (especially when coming from classical mathematics), using type families directly allows more relations to be encoded as judgmental equalities instead of propositional ones, saving you from having to carry these around using transport.
    \item Some direct translations using concepts from the HoTT book \cite{hottbook} can be simplified. In the case of the lifting criterion, we found an alternative condition that saved us from a lot of truncations (see Lemma \ref{lem:eqv-lift-crit}).
\end{enumerate}

We were able to closely mirror the classical proofs found in a standard reference like Hatcher's \emph{Algebraic Topology} \cite{hatcherAT}. Extension by weak constancy (Lemma \ref{lem:ext_weak_constancy}) proved vital to reproduce classical constructions that rely on the mere existence of paths. The transport operation does feature more prominently in HoTT proofs than in the classical ones: it is used to construct both homotopy lifts, homotopy extensions, and to define dependent paths. The notions of transport and dependent paths do faithfully represent the geometrical reasoning in the classical proofs. (The abundance of transport operations did make us appreciate all the lemmas in the HoTT book characterizing transport in different type families \cite[Ch. 2]{hottbook}.)

We see both advantages and disadvantages with the synthetic approach to algebraic topology.
The low-level encoding of mathematical concepts allows one to be more explicit about the objects involved without being overwhelmed by notation, and the theory can be mechanized in proof assistants much more easily. 
Truncations are a nice way to make explicit the difference between constructive and platonic existence statements in mathematics, as with the mere existence of a paths in a path-connected spaces. On the other hand, they are a nuisance to deal with. In Section \ref{sec:classification}, we managed to avoid a lot of truncations since the fibers~$F(x)$ of a covering space are sets, but we were not so lucky in Section \ref{sec:change_of_basepoint}. By the end we had gotten used to working with them --- and hopefully the reader with us.

At the start of this project, we only had a passing knowledge of what made HoTT special compared to the standard flavor of Martin-L\"of type theory. Having formalized some classical results from algebraic topology ourselves, we have gained a better understanding and deeper appreciation for concepts like transport, truncations, and univalence.



\bibliography{general_biblio}

\end{document}